\documentclass[12pt,a4paper]{amsart}
\usepackage{dsfont}
\usepackage{graphicx}
\usepackage{amssymb}
\usepackage[all]{xy}
\usepackage{amsmath}
\usepackage{tikz}
\usepackage[french,english]{babel}
\usepackage{amssymb,amsmath,amsfonts,amsthm,amscd}
\usepackage{indentfirst,graphicx}
\usepackage[font={scriptsize},captionskip=5pt]{subfig}
\usepackage{tikz}
\usepackage{tikz-cd}
\usetikzlibrary{matrix}

\topmargin 0cm     
\headsep 1cm
\headheight 0cm
\evensidemargin 0.25cm
\oddsidemargin 0.25cm
\textwidth 16cm        
\textheight 21.6cm     

\newtheorem{theorem}{Theorem}[section]
\newtheorem{corollary}[theorem]{Corollary}
\newtheorem{proposition}[theorem]{Proposition}
\newtheorem{lemma}[theorem]{Lemma}

\newtheorem{definition}[theorem]{Definition}

\newtheorem{example}[theorem]{Example}
\newtheorem{remark}[theorem]{Remark}


\newcommand{\cB}{{\mathcal B}}

\newcommand{\cF}{{\mathcal F}}

\newcommand{\cO}{{\mathcal O}}

\newcommand{\cR}{{\mathcal R}}

\newcommand{\cU}{{\mathcal U}}

\newcommand{\cX}{{\mathcal X}}

\newcommand{\cM}{{\mathcal M}}

\newcommand{\cOXp}{{\cO_X^p}}
\newcommand{\bC}{{\mathbb{C}}}
\newcommand{\bP}{{\mathbb{P}}}

\newcommand{\Projan}{{\mbox{Projan}}}
\newcommand{\cRM}{\cR(\cM)}
\newcommand{\cRMD}{\cR(\cM_D)}
\newcommand{\tM}{\mathbf{2}\cM}
\newcommand{\cRtM}{\cR(\mathbf{2}\cM)}

\newcommand{\projmxd}{\mbox{Projan}(\cR(JM(X)_D))}

\begin{document}

\title{The projective analytic spectrum of the double of a module}
\author{Terence Gaffney and Thiago da Silva}

\maketitle

\begin{abstract}
	{\small In this work, we investigate the projectivized analytic spectrum of the double of a module, establishing some general properties, and we apply these results to  $\mbox{Projan}(\cR((JM(X))_D))$ over the origin in $C\times C$, where $C$ is an irreducible curve in a hypersurface $X$.}
\end{abstract}

\thispagestyle{empty}

\section*{Introduction}

The concept of Lipschitz saturation of an ideal is introduced in \cite{G2}, within the framework of bi-Lipschitz equisingularity. The investigation of bi-Lipschitz equisingularity began with the work of Zariski \cite{Z}, followed by contributions from Pham and Teissier \cite{PT}, and was subsequently advanced by Lipman \cite{L}, Mostowski \cite{M1,M2}, Parusinski \cite{PA1}, Birbrair \cite{B}, among others.

The notion of the double of an ideal was introduced in \cite{G1}, and later generalized to sheaves of modules in \cite{SG}. The purpose of the double is to capture infinitesimal conditions that may indicate whether a given family of complex analytic varieties is Lipschitz. In \cite{SG}, the authors proved that a family of strongly bi-Lipschitz analytic varieties (in the sense of Ruas and Fernandes \cite{FR2, FR}) necessarily satisfies the infinitesimal Lipschitz condition — a criterion involving the integral closure of the double of a Jacobian submodule. Further results concerning the double in the context of Lipschitz geometry can also be found in \cite{dasilva, SGP, SGP1, SG1}.

The double of an submodule $M\subseteq\cOXp$, denoted $M_D$ is the submodule of $\cO_{X\times X}^{2p}$ generated by $(h\circ\pi_1,h\circ\pi_2)$, $h\in M$, where $\pi_1,\pi_2:X\times X\rightarrow X$ are the projections. 

In \cite[Prop. 3.11]{SG}  stated that the stalk of the double of a sheaf of modules $\cM$ at $(x,x')$, $x\ne x'$, is the direct sum of the stalks of $\cM$ at $x$ and $x'$. Thus, the stalk of the double carries the same information as the stalks of $\cM$ do at $x$ and $x'$, as long $x\neq x'$. If $\cM$ is the jacobian module of a family of analytic varieties, the stalks at $x$ and $x'$ determine the tangent hyperplanes at these two points. Since to control the Lipschitz behavior of the tangent hyperplanes to $X$, it is natural to look for a sheaf on $X\times X$ whose stalks determine the tangent hyperplanes at each pair of distinct points, it is natural to consider the double of the jacobian module. This naturally leads us to investigate the projective analytic spectrum of the double of the double of the Jacobian module associated to $X$, where we will see that it provides the appropriate setting to understand how these pairs of tangent planes relate to one another.

In the Section 1 we recall some definitions and basic results about the double of a module. In Section 2, we establish some general results about the projectivized analytic spectrum of the double of $\cM$, reducing the complexity of computing its fibers at points off the diagonal to the fibers of the spectrum associated with other module induced by $\cM$. We ultimately show that the desired fiber is the join of the two fibers of the spectrum of $\cM$ at that pair of points, that is: $$\Projan(\cRtM)_{(x,x')}\cong \Projan(\cRM)_x\ast \Projan(\cRM)_{x'}.$$

Finally, in Section 3, we investigate more concretely the fibers of the Projan over the origin for curves defined on hypersurfaces. In the end, we give special attention to plane curves in Theorem \ref{T3}.

\section{Background on the double of a module}

We recall some basic results about the double of a module developed in \cite{SG}. Let $X\subseteq\bC^n$ be an analytic space. We denote $\cO_X$ the holomorphic sheaf of rings over $X$. 

Let $\cM$ be an $\cO_X $-submodule of $\cO_X^p$. Consider the projection maps $\pi_1,\pi_2: X \times X \rightarrow X$. We assume that $\cM$ is finitely generated by global sections.

\begin{definition}
	Let $h\in\cOXp$. The double of $h$ is defined as the element
	
	\noindent $h_D:=(h\circ\pi_1,h\circ\pi_2) \in\cO_{X\times X}^{2p}$.
	
	The double of $\cM$ is denoted by $\cM_D$, and is defined as the $\cO_{X\times X}$-submodule of $\cO_{X\times X}^{2p}$ generated by $\{h_D \mid  h\in \cM(X)\}$.
\end{definition}

Consider $z_1,\hdots ,z_n$ the coordinates on $\mathbb{C}^n$. The next lemma is a useful tool to deal with the double. In \cite{SG} we see that it is possible to obtain a set of generators for $\cM_D$ from a set of generators of $\cM$.

\begin{proposition}[\cite{SG}, Proposition 3.3]\label{P2}
	Suppose that $\cM$ is generated by $\{h_1,\ldots,h_r \}$. Then, the following sets are generators of $\cM_D$:
	\begin{enumerate}
		
		\item $\cB=\{(h_1)_D,\ldots,(h_r)_D\} \cup \{(0_{\cO_{X\times X}^{p}},(z_i\circ\pi_1-z_i\circ\pi_2)(h_j\circ\pi_2))$ $|$ $i\in\{1,\ldots,n\}$ and $j\in\{1,\ldots,r\}\}$.
		
		\item $\cB'=\{(h_1)_D,\ldots,(h_r)_D\} \cup \{((z_i\circ\pi_1-z_i\circ\pi_2)(h_j\circ\pi_1),0_{\cO_{X\times X}^{p}}))$ $|$ $i\in\{1,\ldots,n\}$ and $j\in\{1,\ldots,r\}\}$.
		
		\item $\cB''=\{(h_1)_D,\ldots,(h_r)_D\} \cup \{(z_ih_j)_D$ $|$ $i\in\{1,\ldots,n\}$ and $j\in\{1,\ldots,r\}\}$.
	\end{enumerate}
\end{proposition}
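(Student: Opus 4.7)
My plan is to prove $(1)$ first and then deduce $(2)$ and $(3)$ as simple rearrangements of the extra generators. By definition, $\cM_D$ is the $\cO_{X\times X}$-submodule generated by the doubles $h_D$ of \emph{all} sections $h\in\cM$, so the real content is that one can reduce to a finite list. I would take $h\in\cM$, write locally $h=\sum_j a_j h_j$ with $a_j\in\cO_X$, expand $h_D$ in terms of the $(h_j)_D$, and then control the resulting discrepancy using the diagonal ideal of $X\times X$.

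The central computation is, with the notation $a_j^{(k)}:=a_j\circ\pi_k$ and $h_j^{(k)}:=h_j\circ\pi_k$,
\[
h_D \;=\; \sum_j a_j^{(1)} (h_j)_D \;+\; \sum_j \bigl(0,\,(a_j^{(2)}-a_j^{(1)})\,h_j^{(2)}\bigr).
\]
The first summand already lies in the $\cO_{X\times X}$-span of $\cB$. The hard part will be the second summand: the function $a_j^{(2)}-a_j^{(1)}$ vanishes on the diagonal of $X\times X$, and I will need a Hadamard-type expansion to write
\[
a_j^{(2)}-a_j^{(1)} \;=\; \sum_{i=1}^{n} c_{ij}\,(z_i\circ\pi_1-z_i\circ\pi_2)
\]
with $c_{ij}\in\cO_{X\times X}$. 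This step uses that locally on $X\times X$ the diagonal ideal is generated by $(z_i\circ\pi_1-z_i\circ\pi_2)$, via Taylor expansion of a holomorphic extension of $a_j$ to $\bC^n$; this is the only nontrivial input. Plugging back in expresses $h_D$ as an $\cO_{X\times X}$-combination of elements of $\cB$, giving $(1)$.

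To pass to $(2)$ and $(3)$, I would just check the two identities
\[
(z_i\circ\pi_1)(h_j)_D - (z_ih_j)_D \;=\; \bigl(0,\,(z_i\circ\pi_1-z_i\circ\pi_2)(h_j\circ\pi_2)\bigr),
\]
\[
(z_ih_j)_D - (z_i\circ\pi_2)(h_j)_D \;=\; \bigl((z_i\circ\pi_1-z_i\circ\pi_2)(h_j\circ\pi_1),\,0\bigr),
\]
noting that $z_ih_j\in\cM$ because $\cM$ is an $\cO_X$-submodule of $\cOXp$, so $(z_ih_j)_D\in\cM_D$. These identities exhibit each extra generator of any of $\cB,\cB',\cB''$ as an $\cO_{X\times X}$-linear combination of the doubles of generators together with the extra generators of any of the other two sets. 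Hence all three sets generate the same $\cO_{X\times X}$-submodule, namely $\cM_D$.
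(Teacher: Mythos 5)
Your proposal is correct, and since the paper only recalls this statement from \cite{SG} without reproducing its proof, there is nothing for you to diverge from: the decomposition $h_D=\sum_j (a_j\circ\pi_1)(h_j)_D+\sum_j\bigl(0,(a_j\circ\pi_2-a_j\circ\pi_1)(h_j\circ\pi_2)\bigr)$, the Hadamard-type expansion of $a_j\circ\pi_2-a_j\circ\pi_1$ in the functions $z_i\circ\pi_1-z_i\circ\pi_2$ (via a local holomorphic extension of $a_j$ to a convex ambient neighborhood), and the two identities exchanging the extra generators of $\cB$, $\cB'$, $\cB''$ through $(z_ih_j)_D$ (legitimate because $z_ih_j\in\cM(X)$) constitute the standard argument, and they match the form of the generator matrix $[\cM_D]$ used later in the paper. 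One point you should make explicit: as written, the Hadamard step only yields generation of the germ of $h_D$ at points $(x,x')$ lying in some product $U\times U$ on which a single representation $h=\sum_j a_jh_j$ is valid, i.e.\ near the diagonal; at a point with $x\neq x'$ take possibly different local representations $h=\sum_j a_jh_j$ near $x$ and $h=\sum_j b_jh_j$ near $x'$, and note that some $z_i\circ\pi_1-z_i\circ\pi_2$ is a unit near $(x,x')$, so the discrepancy $\bigl(0,(b_j\circ\pi_2-a_j\circ\pi_1)(h_j\circ\pi_2)\bigr)$ is already an $\cO_{X\times X}$-multiple of the corresponding extra generator of $\cB$. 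With that one-line completion your proof of (1), and hence of (2) and (3) via your two identities, is complete.
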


The next theorem computes the generic rank of the double of a module.

\begin{proposition}[\cite{SG}, Proposition 3.5]\label{T2.9}
	Let $(X,x)$ be an irreducible analytic complex germ of dimension $d\ge 1$, and $\cM\subseteq\cO_{X,x}^p$ a submodule of generic rank $k$. Then $\cM_D$ has generic rank $2k$ at $(x,x)$. 
\end{proposition}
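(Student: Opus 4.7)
The plan is to compute the generic rank of $\cM_D$ explicitly by evaluating a well-chosen generating set at a generic point of $X\times X$ close to $(x,x)$. Since $X$ is irreducible of dimension $d\ge 1$, the product $X\times X$ is irreducible of dimension $2d$, and its diagonal has dimension $d<2d$. Hence a generic $(y,y')\in X\times X$ near $(x,x)$ may be assumed to satisfy $y\ne y'$, with $y$ and $y'$ individually generic points of $X$. Fix generators $h_1,\ldots,h_r$ of $\cM$ realizing the generic rank, i.e.\ such that the $p\times r$ matrix $H(y)=[h_1(y)\mid\cdots\mid h_r(y)]$ has rank exactly $k$ for generic $y\in X$. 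Using the generating set $\cB$ of Proposition~\ref{P2}(1) for $\cM_D$, the generic rank of $\cM_D$ at $(x,x)$ equals $\dim_\bC(V+W)$, where
\begin{align*}
V &= \operatorname{span}_\bC\bigl\{(h_j(y),h_j(y')) : 1\le j\le r\bigr\},\\
W &= \operatorname{span}_\bC\bigl\{(0,(z_i(y)-z_i(y'))\,h_j(y')) : 1\le i\le n,\ 1\le j\le r\bigr\},
\end{align*}
both regarded as subspaces of $\bC^{2p}=\bC^p\oplus\bC^p$.

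I would then bound $\dim(V+W)$ from above and below using the projection $\pi_1\colon\bC^{2p}\to\bC^p$ onto the first factor. Since $W\subseteq\ker\pi_1$, one has $\pi_1(V+W)=\pi_1(V)=\operatorname{span}\{h_j(y)\}$, which has dimension exactly $k$ by the genericity of $y$. Any vector in $(V+W)\cap\ker\pi_1$ has its second coordinate lying in $\operatorname{span}\{h_j(y')\}$, a $k$-dimensional space at generic $y'$; together this yields the upper bound $\dim(V+W)\le 2k$. Conversely, because $y\ne y'$ as points of $\bC^n$, some coordinate $z_{i_0}$ satisfies $z_{i_0}(y)-z_{i_0}(y')\ne 0$, and so
\[
W\supseteq \{0\}\oplus (z_{i_0}(y)-z_{i_0}(y'))\cdot\operatorname{span}\{h_j(y')\}=\{0\}\oplus\operatorname{span}\{h_j(y')\},
\]
a $k$-dimensional subspace contained in $\ker\pi_1\cap(V+W)$. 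Combined with $\dim\pi_1(V+W)=k$, this gives the matching lower bound $\dim(V+W)\ge 2k$, so equality holds.

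The one technical point to verify is that the three genericity conditions — $y\ne y'$, $\operatorname{rank} H(y)=k$, and $\operatorname{rank} H(y')=k$ — can be arranged simultaneously on a non-empty open set of $X\times X$ accumulating at $(x,x)$. This follows from the irreducibility of $X\times X$: each bad locus is a proper analytic subset, and their union has empty interior. I expect no serious obstacle beyond bookkeeping, as the argument amounts to the observation that the extra generators in $\cB$ recover on the second factor precisely the rank-$k$ span that $V$ already carries on the first factor, while contributing nothing to the first factor. Alternatively, one could short-circuit the upper bound by invoking the stalk description $(\cM_D)_{(y,y')}\cong\cM_y\oplus\cM_{y'}$ for $y\ne y'$ (cited in the introduction from \cite{SG}), but the direct computation above is self-contained and keeps the argument tied to the concrete generators of Proposition~\ref{P2}(1).
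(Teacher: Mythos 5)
The paper itself offers no proof of this proposition: it is quoted verbatim from \cite{SG} (Proposition 3.5), so there is no in-paper argument to compare yours against. Judged on its own, your proof is correct and self-contained, and it runs along the natural lines suggested by the surrounding material: you evaluate the generating set $\cB$ of Proposition \ref{P2}(1) at a point $(y,y')$ off the diagonal at which $[\cM(y)]$ and $[\cM(y')]$ both have rank $k$; the containment $V+W\subseteq \mbox{span}\{h_j(y)\}\oplus\mbox{span}\{h_j(y')\}$ gives the upper bound $2k$, and a coordinate $z_{i_0}$ with $z_{i_0}(y)\neq z_{i_0}(y')$ lets the extra generators recover $\{0\}\oplus\mbox{span}\{h_j(y')\}$, giving the lower bound. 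This is exactly the mechanism behind the off-diagonal splitting of Proposition \ref{P2.13}, and your ``bad locus'' $\Delta(X)\cup(\Sigma(\cM)\times X)\cup(X\times\Sigma(\cM))$ is precisely $\Sigma(\cM_D)$ as in Proposition \ref{P4}; so your argument essentially reproves those two statements in the generic-point form needed here, which is a perfectly legitimate (and arguably more elementary) route. Two points you use tacitly deserve an explicit word: that the product of irreducible complex analytic germs is irreducible (this is what makes ``generic rank'' on $X\times X$ well defined and makes the complement of a finite union of proper analytic subsets dense near $(x,x)$), and that $\Sigma(\cM)$ is a proper analytic subset of $X$ because $k$ is the generic rank. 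It is also worth noting, as you implicitly do, that the hypothesis $d\ge 1$ enters only to make $\Delta(X)$ a proper subset of $X\times X$; for $d=0$ the conclusion genuinely fails, so this is not mere bookkeeping.
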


Then next proposition states that the double of a sheaf of modules $\cM$ carries all the information at $(x,x')$ as the stalks of $\cM$ do at $x$ and $x'$, as long $x\neq x'$.

\begin{proposition}[\cite{SG}, Proposition 3.11]\label{P2.13}
	Let $\cM\subseteq\cO_X^p$ be a sheaf of submodules. Consider 
	$(x,x')\in X\times X$ with $x\neq x'$. Then:
	\begin{center}$\cM_D=(\cM_x\circ\pi_1)\oplus (\cM_{x'}\circ\pi_2)$\end{center} at $(x,x')$.
\end{proposition}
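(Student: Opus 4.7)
The plan is to exploit the explicit generators given in Proposition \ref{P2}, using the fact that when $x\neq x'$ the function $z_i\circ\pi_1-z_i\circ\pi_2$ becomes invertible in $\cO_{X\times X,(x,x')}$ for at least one coordinate $z_i$. This will let us split the ``diagonal'' generators $(h\circ\pi_1,h\circ\pi_2)$ of $\cM_D$ into their two independent components inside $\cO_{X\times X,(x,x')}^{2p}=\cO_{X\times X,(x,x')}^p\oplus\cO_{X\times X,(x,x')}^p$.

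First I would dispatch the inclusion $(\cM_D)_{(x,x')}\subseteq(\cM_x\circ\pi_1)\oplus(\cM_{x'}\circ\pi_2)$. Any generator $h_D=(h\circ\pi_1,h\circ\pi_2)$ with $h\in\cM(X)$ sits tautologically in the right-hand side, and since the target is an $\cO_{X\times X,(x,x')}$-submodule of $\cO_{X\times X,(x,x')}^{2p}$, arbitrary local combinations remain inside.

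For the reverse inclusion, the key step is the unit argument. Because $x\neq x'$, choose an index $i_0$ with $z_{i_0}(x)\neq z_{i_0}(x')$; then $u:=z_{i_0}\circ\pi_1-z_{i_0}\circ\pi_2$ satisfies $u(x,x')\neq 0$, hence $u$ is a unit in $\cO_{X\times X,(x,x')}$. By Proposition \ref{P2}(1), the elements $(0,\,u\cdot(h_j\circ\pi_2))$ belong to $\cM_D$ for each generator $h_j$ of $\cM$; multiplying by $u^{-1}$ in the stalk produces $(0,h_j\circ\pi_2)\in(\cM_D)_{(x,x')}$. Since the $h_j$ generate $\cM_{x'}$ as an $\cO_{X,x'}$-module (using that $\cM$ is finitely generated by global sections), taking $\cO_{X\times X,(x,x')}$-linear combinations and pulling back scalars via $\pi_2$ yields every $(0,h\circ\pi_2)$ with $h\in\cM_{x'}$. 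The symmetric argument via Proposition \ref{P2}(2) gives every $(h\circ\pi_1,0)$ with $h\in\cM_x$. Adding these two families produces an arbitrary element of the alleged right-hand side, finishing the equality.

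The directness of the sum is automatic from the ambient splitting of $\cO_{X\times X,(x,x')}^{2p}$. I do not foresee a serious obstacle: the single nontrivial ingredient is noticing that Proposition \ref{P2}(1) and (2) precisely introduce the factors $z_i\circ\pi_1-z_i\circ\pi_2$ that become units off the diagonal, and the rest is bookkeeping about local generation. The mild point to watch is the interpretation of $\cM_x\circ\pi_1$ and $\cM_{x'}\circ\pi_2$ as the $\cO_{X\times X,(x,x')}$-submodules of $\cO_{X\times X,(x,x')}^p$ generated, respectively, by $\{h\circ\pi_1:h\in\cM_x\}$ and $\{h\circ\pi_2:h\in\cM_{x'}\}$, so that the identification with stalks of the pullback sheaves is unambiguous.
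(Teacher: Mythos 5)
Your proof is correct. Note that the paper does not actually prove this statement: it is recalled as background, quoted from \cite{SG} (Proposition 3.11), so there is no in-paper argument to compare against; but your route is exactly the natural one and, as far as the ingredients available in this paper go, the expected one. The two essential points are both present: (i) the containment $(\cM_D)_{(x,x')}\subseteq(\cM_x\circ\pi_1)\oplus(\cM_{x'}\circ\pi_2)$ is tautological on generators $h_D$, and (ii) off the diagonal some $u=z_{i_0}\circ\pi_1-z_{i_0}\circ\pi_2$ is a unit of $\cO_{X\times X,(x,x')}$, so the generators $(0,u\cdot(h_j\circ\pi_2))$ from Proposition \ref{P2}(1) (equivalently, the directly computed combinations $(z_{i_0}\circ\pi_1)(h_j)_D-(z_{i_0}h_j)_D$) yield $(0,h_j\circ\pi_2)$ in the stalk, and then either the symmetric use of Proposition \ref{P2}(2) or simply subtracting from $(h_j)_D$ yields $(h_j\circ\pi_1,0)$. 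Your closing remarks are the right ones to make explicit: the germs of the global generators $h_j$ generate the stalks $\cM_x$ and $\cM_{x'}$ because $\cM$ is assumed finitely generated by global sections, the scalars from $\cO_{X,x}$ and $\cO_{X,x'}$ are absorbed via $\pi_1^*$ and $\pi_2^*$ into $\cO_{X\times X,(x,x')}$, directness is inherited from the ambient splitting $\cO_{X\times X,(x,x')}^{2p}=\cO_{X\times X,(x,x')}^{p}\oplus\cO_{X\times X,(x,x')}^{p}$, and the right-hand side is read as the stalk of $\pi_1^*(\cM)\oplus\pi_2^*(\cM)$ at $(x,x')$ (which is how the paper uses it when identifying $(\tM)_{(x,x')}$ with $(\cM_D)_{(x,x')}$).
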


Proposition \ref{P2.13} provides additional motivation for the idea of the double: In order to control the Lipschitz behavior of pairs of tangent planes at two different points $x$ and $x'$ of a family $\cX$, it is helpful to have each module which determines the tangent hyperplanes at each point as part of the construction. Furthermore, this proposition shows that $JM(\cX)_D$ at $(x,x')$ contains both $JM(\cX)_x$ and $JM(\cX)_{x'}$. 

Now we recall the notion of the singular set of a sheaf of modules $\cM$. 

Let $\cM$ be a sheaf of $\cO_X$-submodules of $\cOXp$ generated by global sections $\{g_1,\hdots ,g_r\}$. The matrix $[\cM]$ is the $p\times r$ matrix whose columns are the generators of $\cM$. For each $x\in X$, $[\cM(x)]$ is the $p\times r$ matrix obtained by applying each entry of $[\cM]$ at $x$. We denote $\mbox{row}[\cM(x)]$ the rowspace of the matrix $[\cM(x)]$. Suppose the generic rank of $\cM$ is $k$. The singular set of $\cM$ is defined as the set $$\Sigma(\cM):=\{x\in X \mid \mbox{rank}[\cM(x)]<k\}.$$

The next proposition computes $\Sigma(\cM_D)$ in $\cO_{X\times X}^{2p}$.

\begin{proposition}[\cite{SG}, Proposition 3.7]\label{P4} Let $\cM$ be a sheaf of submodules of $\cOXp$ of generic rank $k$. Then
	\begin{center}$\Sigma(\cM_D)= \Delta(X)\cup(X\times \Sigma(\cM))\cup(\Sigma(\cM)\times X).$\end{center} 
\end{proposition}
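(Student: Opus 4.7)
My plan is to reduce the proposition to a rank computation on the evaluation matrix $[\cM_D(x,x')]$, working from an explicit generating set. Using the set $\cB$ from Proposition \ref{P2}, I would first dispose of the diagonal. At any $(x,x)\in\Delta(X)$, every ``extra'' generator $(0,(z_i\circ\pi_1-z_i\circ\pi_2)(h_j\circ\pi_2))$ vanishes identically because $(z_i\circ\pi_1-z_i\circ\pi_2)|_{\Delta(X)}\equiv 0$, and the surviving generators $(h_j)_D$ evaluate to the pairs $(h_j(x),h_j(x))\in\bC^{2p}$. The subspace spanned therefore has dimension at most $\mbox{rank}[\cM(x)]\le k$. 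Since the generic rank of $\cM_D$ is $2k$ by Proposition \ref{T2.9} and $k<2k$, this immediately forces $(x,x)\in\Sigma(\cM_D)$, so $\Delta(X)\subseteq\Sigma(\cM_D)$.

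For a pair $(x,x')$ off the diagonal I would appeal to Proposition \ref{P2.13}, which gives the decomposition $\cM_D=(\cM_x\circ\pi_1)\oplus(\cM_{x'}\circ\pi_2)$ at $(x,x')$. Evaluating this direct sum yields $\mbox{rank}[\cM_D(x,x')]=\mbox{rank}[\cM(x)]+\mbox{rank}[\cM(x')]$. One can also read this directly from the generators $\cB$: since $x\neq x'$, some coordinate difference $z_{i_0}(x)-z_{i_0}(x')$ is nonzero, so the extra generators span $\{0\}\oplus\mbox{col}[\cM(x')]\subseteq\bC^{2p}$; subtracting these contributions from the $(h_j)_D$ columns then produces $\mbox{col}[\cM(x)]\oplus\{0\}$, giving the same conclusion. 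Hence $(x,x')\in\Sigma(\cM_D)$ if and only if $\mbox{rank}[\cM(x)]+\mbox{rank}[\cM(x')]<2k$, which (each summand being at most $k$) is equivalent to $x\in\Sigma(\cM)$ or $x'\in\Sigma(\cM)$. Combining the two cases delivers the claimed decomposition.

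I do not anticipate a genuine obstacle: once Propositions \ref{P2} and \ref{P2.13} are in hand, the argument is a transparent rank count. The only point requiring some care is confirming that the generic rank $2k$ is actually attained at every off-diagonal pair $(x,x')$ with $x,x'\notin\Sigma(\cM)$, and this is immediate from the direct-sum splitting. So the proof should be short and essentially computational.
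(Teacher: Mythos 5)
Your argument is correct, but note that this paper does not prove the statement at all: it is recalled verbatim from \cite{SG} (Proposition 3.7), so there is no in-paper proof to compare against. Judged on its own, your rank count is sound and is the natural argument. On the diagonal the extra generators of Proposition \ref{P2} vanish because $z_i\circ\pi_1-z_i\circ\pi_2$ is identically zero there, so $\mbox{rank}[\cM_D(x,x)]\le \mbox{rank}[\cM(x)]\le k<2k$; off the diagonal the splitting of Proposition \ref{P2.13} (or, equivalently, your direct column manipulation using a nonzero coordinate difference $z_{i_0}(x)-z_{i_0}(x')$) gives $\mbox{rank}[\cM_D(x,x')]=\mbox{rank}[\cM(x)]+\mbox{rank}[\cM(x')]$, and since each summand is at most $k$, this is $<2k$ exactly when $x\in\Sigma(\cM)$ or $x'\in\Sigma(\cM)$. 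Your closing observation that the value $2k$ is actually attained off the diagonal away from $\Sigma(\cM)\times X\cup X\times\Sigma(\cM)$ in fact reproves Proposition \ref{T2.9}, so citing that proposition is not even necessary. The only caveat worth recording is the degenerate case $k=0$ (and, more generally, the standing hypotheses of \cite{SG}, e.g.\ irreducibility as in Proposition \ref{T2.9}): if $k=0$ then $\Sigma(\cM_D)=\emptyset$ while the right-hand side contains $\Delta(X)$, so the statement implicitly assumes $k\ge 1$, which your diagonal step uses when invoking $k<2k$.
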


We end this section recalling the notion of the analytic projectivization of the Rees algebra of a module. Denote $\cRM$ as the Rees algebra of $\cM$.  Consider the set \begin{center}$\cU(\cM):=\left\{(x,[\ell])\in X\times \bP^{r-1} \mid [\cM(x)]\mbox{ has maximal rank and }\ell\in\mbox{row}[\cM(x)]    \right\}.$\end{center}

\begin{definition}
	The projective analytic spectrum of $\cRM$ is defined as \begin{center}$\Projan(\cRM):=\overline{\cU(\cM)},$\end{center}
	\noindent where the closure is taken on $X\times\bP^{r-1}$.
\end{definition}

The main motivation for this definition is the particular case that $\cM=JM(X)$, the jacobian module of $X$ generated by $\left\{ \frac{\partial F}{\partial z_1},\hdots ,\frac{\partial F}{\partial z_n} \right\}$, where $X$ is defined by an analytic map $F:(\bC^n,0)\rightarrow(\bC^p,0)$. In this setting, suppose $(x,[\ell_1,\hdots ,\ell_n])\in\cU(JM(X))$. So the jacobian matrix of $F$ has maximal rank at $x$, hence $x$ is a smooth point of $X$. Since the rowspace of the jacobian matrix at $x$ is $TX_x^{\perp}$ then $TX_x\subseteq H_{[\ell_1,\hdots ,\ell_n]}$, where $H_{[\ell_1,\hdots ,\ell_n]}$ is the canonical hyperplane induced by $(\ell_1,\hdots ,\ell_n)$. So we can see $\cU(JM(X))$ as a set of pairs $(x,H)$ where $x$ is a smooth point of $X$ and $H$ is a tangent hyperplane. Hence, the fiber of $\Projan(\cRM)$ over the origin captures limiting tangent hyperplanes of $X$.

\section{General results about the Projan of the Rees Algebra of the double}

Let $\cM$ be a sheaf of $\cO_X$-submodules of $\cOXp$ generated by global sections $\{g_1,\hdots ,g_r\}$. Consider the projections $\pi_1,\pi_2:X\times X\rightarrow X$.

For each $x\in X$ we denote $\Projan(\cRM)_x$ the fiber of $x$ with respect to the canonical projection $\pi_{\cM}:\Projan(\cRM)\rightarrow X$.

We denote $\mathbf{2}\cM:=\pi_1^*(\cM)\oplus\pi_2^*(\cM)$ which is an $\cO_{X\times X}$-submodule of $\cO_{X\times X}^{2p}$. So $\Projan(\cRtM)\subseteq X\times X\times\bP^{2r-1}$ and the Proposition \ref{P2.13} tell us that $(\tM)_{(x,x')}=(\cM_D)_{(x,x')}$ provided the point $(x,x')$ is off the diagonal.

We have a simpler description of $\Projan(\cRMD)$ and $\Projan(\cRtM)$. 

\begin{proposition}\label{P2.1}
	Suppose $\cM$ is a sheaf of $\cO_X$-submodules of $\cOXp$. Then \begin{center}$\Projan(\cRMD)=\overline{\left\{ \begin{matrix}
	(x,x',[u+u',(x_1-x_1')u',\hdots ,(x_n-x_n')u'])\mid \\
	x\neq x'\in X, [\cM(x)]\mbox{ and }[\cM(x')]\mbox{ have maximal}\\
	\mbox{rank, }u\in\mbox{row}[\cM(x)]\mbox{ and }u'\in\mbox{row}[\cM(x')]   \\
	\end{matrix}       \right\}}$\end{center}

and 

\begin{center}$\Projan(\cRtM)=\overline{\left\{ \begin{matrix}
	(x,x',[u,u'])\mid [\cM(x)]\mbox{ and }[\cM(x')]\mbox{ have maximal}\\
	\mbox{rank, }u\in\mbox{row}[\cM(x)]\mbox{ and }u'\in\mbox{row}[\cM(x')]   \\
	\end{matrix}       \right\}}.$\end{center}
\end{proposition}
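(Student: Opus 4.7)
The plan is to fix explicit generating sets, form the evaluation matrices $[\tM(x,x')]$ and $[\cM_D(x,x')]$ in block shape, and read off their rowspaces directly from the block structure. The closure operation in the definition of $\Projan$ then yields both formulas. Fix generators $\{g_1,\hdots,g_r\}$ of $\cM$ throughout.

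First I would handle $\tM$, which is the easier case. The natural $2r$ generators $\{(g_j\circ\pi_1,0)\}_{j=1}^r\cup\{(0,g_j\circ\pi_2)\}_{j=1}^r$ make $[\tM(x,x')]$ block diagonal with blocks $[\cM(x)]$ and $[\cM(x')]$. Its rowspace is therefore $\{(u,u')\mid u\in\mbox{row}[\cM(x)],\,u'\in\mbox{row}[\cM(x')]\}\subseteq\bC^{2r}$, and maximal rank is attained exactly when both blocks have maximal rank. This identifies $\cU(\tM)$ with the set inside the closure in the second formula.

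For $\cM_D$ I would use the generating set $\cB$ from Proposition \ref{P2}(1). Evaluating at $(x,x')$ gives the $2p\times r(n+1)$ matrix
\[
[\cM_D(x,x')]=\begin{pmatrix} [\cM(x)] & 0 & \cdots & 0 \\ [\cM(x')] & (x_1-x_1')[\cM(x')] & \cdots & (x_n-x_n')[\cM(x')] \end{pmatrix}.
\]
Any element of the rowspace is a sum of a top contribution $(u,0,\hdots,0)$ with $u\in\mbox{row}[\cM(x)]$ and a bottom contribution $(u',(x_1-x_1')u',\hdots,(x_n-x_n')u')$ with $u'\in\mbox{row}[\cM(x')]$, producing exactly the expression $[u+u',(x_1-x_1')u',\hdots,(x_n-x_n')u']$ that appears in the statement.

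The main obstacle is the rank bookkeeping. Proposition \ref{T2.9} says the generic rank of $\cM_D$ is $2k$, where $k$ is the generic rank of $\cM$, and I would verify that this maximum is attained exactly when $x\neq x'$ and both $[\cM(x)]$ and $[\cM(x')]$ have maximal rank $k$: if some $x_i-x_i'\neq 0$, then $(x_i-x_i')u'=0$ for all $i$ forces $u'=0$, so the top- and bottom-type contributions intersect trivially and their ranks add; on the diagonal the rank collapses, consistent with Proposition \ref{P4}. Hence $\cU(\cM_D)$ is precisely the displayed set (before closure), and closing up in $(X\times X)\times\bP^{r(n+1)-1}$ yields the first formula.
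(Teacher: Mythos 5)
Your proof is correct and follows essentially the same route as the paper's: the same block matrix of generators of $\cM_D$ (the one induced by Proposition \ref{P2}), the same decomposition of the rowspace into a top contribution $(u,0,\hdots,0)$ and a bottom contribution $(u',(x_1-x_1')u',\hdots,(x_n-x_n')u')$, and the same identification of $\cU(\cM_D)$ and $\cU(\tM)$ with the displayed sets before taking closures. The only cosmetic difference is that you verify the off-diagonal rank additivity directly from the block structure (citing Proposition \ref{T2.9} for the generic rank), whereas the paper simply invokes the singular-set description of Proposition \ref{P4} for both directions.
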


\begin{proof}
	Let us prove the first equality. By Proposition 2.4 of \cite{SG}, we can write a matrix of generators of $\cM_D$ induced by $\{g_1,\hdots ,g_r\}$ on the form
	
	\begin{center}
		$[\cM_D]=\begin{bmatrix}
		w_1  &   0   &  \hdots &  0\\
		\vdots & \vdots &  & \vdots\\
		w_p    &  0  &  \hdots  &  0\\
		w_1'  &  (z_1-z_1')w_1' & \hdots  & (z_n-z_n')w_1'\\
		\vdots  & \vdots  & & \vdots \\
		w_p'  &  (z_1-z_1')w_p' & \hdots  & (z_n-z_n')w_p'\\
		\end{bmatrix}$
	\end{center}
\noindent where $w_1,\hdots ,w_p$ are the rows of $[\cM]$. Let us call $H$ the set under the bar of the equality on the statement. It suffices to prove that $H=\cU(\cM_D)$. Let $(x,x',[u+u',(x_1-x_1')u',\hdots ,(x_n-x_n')u'])\in H$. Since $x\neq x'$ and $[\cM(x)]$ and $[\cM(x')]$ have maximal rank then Proposition 2.7 of \cite{SG} implies that $(x,x')\notin\Sigma(\cM_D)$, so $[\cM_D(x,x')]$ has maximal rank. By hypothesis we can write $u=\sum\limits_{i=1}^p\alpha_iw_i(x)$ and $u'=\sum\limits_{i=1}^p\alpha_i'w_i(x')$, for some $\alpha_i,\alpha_i'\in\bC$. Thus $(x,x',[u+u',(x_1-x_1')u',\hdots ,(x_n-x_n')u'])$ can be written as $$\sum\limits_{i=1}^p\alpha_i(w_i(x),0,\hdots,0)+\sum\limits_{i=1}^p\alpha_i'(w_i(x'),(x_1-x_1')w_i(x'),\hdots,(x_n-x_n')w_i(x'))$$ which is a linear combination of the rows of $[\cM_D(x,x')]$. Hence $H\subseteq\cU(\cM_D)$.

Conversely, let $(x,x',[\ell_1,\hdots ,\ell_r,(\ell_{ij})])\in\cU(\cM_D)$. So $[\cM_D(x,x')]$ has maximal rank then $(x,x')\notin \Sigma(\cM_D)=\Delta(X)\cup(X\times\Sigma(\cM))\cup(\Sigma(\cM)\times X)$. Thus $x\neq x'$, $[\cM(x)]$ and $[\cM(x')]$ have maximal rank. Since $(\ell_1,\hdots ,\ell_r,(\ell_{ij}))$ belongs to the rowspace of $[\cM_D(x,x')]$ then it can be written as $$\sum\limits_{i=1}^p\alpha_i(w_i(x),0,\hdots,0)+\sum\limits_{i=1}^p\alpha_i'(w_i(x'),(x_1-x_1')w_i(x'),\hdots,(x_n-x_n')w_i(x'))$$ \noindent for some $\alpha_i,\alpha_i'\in\bC$. Taking $u:=\sum\limits_{i=1}^p\alpha_iw_i(x)$ and $u':=\sum\limits_{i=1}^p\alpha_i'w_i(x')$ one has $$(\ell_1,\hdots ,\ell_r,(\ell_{ij}))=(u+u',(x_1-x_1')u',\hdots,(x_n-x_n')u')$$
\noindent with $u\in\mbox{row}[\cM(x)]$ and $u'\in\mbox{row}[\cM(x')]$.

The second equality can be proved in an analogous way.
\end{proof}

As a corollary we have an important particular case.

\begin{corollary}
	If $X\subseteq\bC^n$ is an analytic variety defined by $F:(\bC^n,0)\rightarrow(\bC^p,0)$ then 
	\begin{center}$\Projan(JM(X)_D)=\overline{\left\{ \begin{matrix}
		(x,x',[u+u',(x_1-x_1')u',\hdots ,(x_n-x_n')u'])\mid \\
		x\neq x'\mbox{ are smooth points of }X, TX_x\subseteq H_u\\
		\mbox{and }TX_{x'}\subseteq H_{u'}\\
		\end{matrix}       \right\}}.$\end{center}
\end{corollary}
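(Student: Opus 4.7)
The plan is to apply Proposition \ref{P2.1} directly to the case $\cM = JM(X)$, with $X$ defined as the zero set of $F:(\bC^n,0)\to(\bC^p,0)$. Since the matrix $[JM(X)]$ is essentially the Jacobian matrix of $F$, I need only translate the two algebraic conditions appearing inside the closure — maximal rank of $[\cM(x)]$, and membership $u\in\mbox{row}[\cM(x)]$ — into the geometric language of smooth points and tangent hyperplane containment.

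First I would recall the standard fact that $[JM(X)(x)]$ has maximal rank precisely when $x$ is a smooth point of $X$. This is because the rank of the Jacobian of $F$ at $x$ equals the codimension of $X$ at $x$ exactly when $F$ is a submersion at $x$, which (by the implicit function theorem, and the fact that $JM(X)$ is the jacobian module of the defining map) is the criterion for smoothness of $X$ at $x$. Thus the conditions ``$[\cM(x)]$ and $[\cM(x')]$ have maximal rank'' become ``$x$ and $x'$ are smooth points of $X$.''

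Next I would recall (as noted in the motivating paragraph immediately preceding Section 2) that at a smooth point $x\in X$ the rowspace $\mbox{row}[JM(X)(x)]$ is exactly the conormal space $TX_x^{\perp}$. Therefore a vector $u=(\ell_1,\ldots,\ell_n)$ lies in $\mbox{row}[JM(X)(x)]$ if and only if $u\perp TX_x$, which by definition of the canonical hyperplane $H_u$ is equivalent to the inclusion $TX_x\subseteq H_u$. The same reasoning applies to $x'$ and $u'$.

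With these two translations in hand, the set appearing under the closure in Proposition \ref{P2.1} becomes exactly the set in the statement of the corollary. Since the closure operations are taken in the same ambient space $X\times X\times\bP^{r-1}$ (with $r=n$ here, as $JM(X)$ is generated by the $n$ partial derivatives of $F$), the equality of closures follows immediately from the equality of the sets being closed. There is no real obstacle here — the only thing to be careful about is matching the indexing of generators so that the rowspace interpretation and the ``$H_u$'' notation line up correctly, but this is purely bookkeeping.
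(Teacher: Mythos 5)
Your proposal is correct and is essentially the argument the paper intends: the corollary is just Proposition \ref{P2.1} applied to $\cM=JM(X)$, combined with the dictionary recalled at the end of Section 1 (maximal rank of the Jacobian matrix at $x$ corresponds to $x$ being a smooth point, and $\mbox{row}[JM(X)(x)]=TX_x^{\perp}$, so $u\in\mbox{row}[JM(X)(x)]$ iff $TX_x\subseteq H_u$). The only slip is cosmetic: for the double the closure is taken in $X\times X\times\bP^{n(n+1)-1}$ rather than $X\times X\times\bP^{r-1}$, but since both sides of the asserted equality sit in the same ambient space this does not affect the argument.
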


Notice that the fiber of $\Projan(JM(X)_D)$ over the origin captures limiting tangent hyperplanes of $X$ at two different points as these points come together to the origin, taking into account the secant lines defined by these points as they tend to the origin.

\begin{remark}
	Let $x,x'\in X$, $x\neq x'$. Suppose that $u,v\in\mbox{row}[\cM(x)]$ and $u',v'\in\mbox{row}[\cM(x')]$ generate the same element
	$$[u+u',(x_1-x_1')u',\hdots ,(x_n-x_n')u']=[v+v',(x_1-x_1')v',\hdots ,(x_n-x_n')v']$$
	\noindent in the fiber $\Projan(\cRMD)_{(x,x')}$, then $[u,u']=[v,v']$ as elements of $\bP^{2r-1}$. In fact, there exists $\lambda\in\bC-\{0\}$ such that $u+u'=\lambda(v+v')$ and $(x_i-x_i')u'=\lambda(x_i-x_i')v',\forall i\in\{1,\hdots ,n\}$. Since $x_i\neq x_i'$ for some $i$, then $u'=\lambda v'$. Hence, $u=\lambda v$.
\end{remark}

We want to describe $\Projan(\cRMD)_{(x,x')}$ in terms of the sheaf $\tM$. The next lemmas will be useful to do it.

\begin{lemma}\label{L2.4}
	Let $x,x'\in X$, $x\neq x'$. Suppose $x_i\neq x_i'$, for some $i\in\{1,\hdots,n\}$. Let $(x,x',[v,w_1,\hdots ,w_n])\in\Projan(\cRMD)_{(x,x')}$. 
	
	Then:
	
	\begin{enumerate}
		\item [a)] $(x,x',[(x_i-x_i')v-w_i,w_i])\in\Projan(\cRtM)_{(x,x')}$;
		
		\item [b)] If $j\in\{1,\hdots,n\}$ then $w_j=\frac{x_j-x_j'}{x_i-x_i'}w_i$;
		
		\item [c)] If $j\in\{1,\hdots,n \}$ is such that $x_j\neq x_j'$ then $$[(x_j-x_j')v-w_j,w_j]=[(x_i-x_i')v-w_i,w_i].$$
		\end{enumerate}
\end{lemma}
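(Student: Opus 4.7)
The plan is to exploit the closure description of $\Projan(\cR(\cM_D))$ supplied by Proposition \ref{P2.1} and pass to convergent affine representatives, then read off everything from the resulting limits. Concretely, given $(x,x',[v,w_1,\ldots,w_n])$ in the fiber, I would choose a sequence
\[ (y^{(k)}, y'^{(k)},\, [u^{(k)}+u'^{(k)},\,(y_1^{(k)}-y_1'^{(k)})u'^{(k)},\ldots,(y_n^{(k)}-y_n'^{(k)})u'^{(k)}]) \]
in the set under the bar of Proposition \ref{P2.1} converging to the given point. Using that convergence in $\bP^{r(n+1)-1}$ is realized by affine convergence of suitable representatives, I would rescale $u^{(k)},u'^{(k)}$ so that
\[ u^{(k)}+u'^{(k)}\longrightarrow v \quad\text{and}\quad (y_j^{(k)}-y_j'^{(k)})u'^{(k)}\longrightarrow w_j \quad (j=1,\ldots,n) \]
as affine vectors.

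The key observation is that $x_i\neq x_i'$ forces $y_i^{(k)}-y_i'^{(k)}\to x_i-x_i'\neq 0$, so for $k$ large this scalar is nonzero and can be inverted. Dividing the $i$-th block then yields the affine limit
\[ u'^{(k)}\longrightarrow \frac{w_i}{x_i-x_i'},\qquad u^{(k)}\longrightarrow v-\frac{w_i}{x_i-x_i'}=\frac{(x_i-x_i')v-w_i}{x_i-x_i'}. \]
This is the single calculation that drives all three parts.

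For (a), I would note that each $(y^{(k)},y'^{(k)},[u^{(k)},u'^{(k)}])$ lies in $\cU(\tM)$ by Proposition \ref{P2.1} applied to $\tM$ (the row space of $[\tM(y^{(k)},y'^{(k)})]$ is $\mathrm{row}[\cM(y^{(k)})]\oplus\mathrm{row}[\cM(y'^{(k)})]$). Scaling the affine limit above by $x_i-x_i'\neq 0$, this sequence converges in $X\times X\times\bP^{2r-1}$ to $(x,x',[(x_i-x_i')v-w_i,\,w_i])$, which therefore lies in the closed set $\Projan(\cR(\tM))$. (One should briefly check that $((x_i-x_i')v-w_i,w_i)$ is nonzero: if $w_i=0$ and $(x_i-x_i')v-w_i=0$ then $v=0$, but then $w_j=\lim(y_j^{(k)}-y_j'^{(k)})u'^{(k)}=(x_j-x_j')\cdot 0=0$ for every $j$, contradicting that $[v,w_1,\ldots,w_n]$ is a projective point.) For (b), I would simply divide the affine identity $(y_j^{(k)}-y_j'^{(k)})u'^{(k)}=w_j^{(k)}$ by $(y_i^{(k)}-y_i'^{(k)})u'^{(k)}=w_i^{(k)}$ for $k$ large, giving $w_j^{(k)}=\frac{y_j^{(k)}-y_j'^{(k)}}{y_i^{(k)}-y_i'^{(k)}}w_i^{(k)}$, and pass to the limit.

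Finally, (c) is a short algebraic consequence of (b): with $\lambda=(x_j-x_j')/(x_i-x_i')\neq 0$, part (b) gives $w_j=\lambda w_i$, hence $(x_j-x_j')v-w_j=\lambda\bigl((x_i-x_i')v-w_i\bigr)$, so the two homogeneous vectors are proportional.

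The only delicate step is really the normalization of representatives in the passage from projective to affine convergence, and the verification that the target vector in (a) is nontrivial; the rest of the argument is essentially bookkeeping applied to the explicit limits of $u^{(k)}$ and $u'^{(k)}$.
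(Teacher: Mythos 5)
Your proposal is correct and follows essentially the same route as the paper: both arguments use the closure description of Proposition \ref{P2.1}, pass to convergent (suitably rescaled) representatives, and read off parts (a)--(c) from the affine limits, the only cosmetic differences being that you use sequences where the paper uses arcs $\Phi(t)$, and you divide by $y_i^{(k)}-y_i'^{(k)}$ to extract $u^{(k)},u'^{(k)}$ individually where the paper instead multiplies its curve $\beta(t)$ through by $\varphi_i(t)-\varphi_i'(t)$. Your explicit check that $((x_i-x_i')v-w_i,\,w_i)\neq 0$ is a small point the paper leaves implicit, but it does not change the substance of the argument.
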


\begin{proof}
	By Proposition \ref{P2.1} we can write $$(x,x',[(x_i-x_i')v-w_i,w_i])=\lim\limits_{t\rightarrow 0} \Phi(t)$$
	\noindent where\\ $\Phi(t)=(\varphi(t),\varphi'(t),[\gamma(t)+\gamma'(t),(\varphi_1(t)-\varphi_1'(t))\gamma'(t),\hdots,(\varphi_n(t)-\varphi_n'(t))\gamma'(t)])$ and for all $t$ close to $0$:
	
	\begin{itemize}
		\item $\varphi(t)\neq\varphi'(t)$;
		
		\item $[\cM(\varphi(t))]$ and $[\cM(\varphi'(t))]$ have maximal rank;
		
		\item $\gamma(t)\in\mbox{row}[\cM(\varphi(t))]$ and $\gamma'(t)\in\mbox{row}[\cM(\varphi'(t))]$.
	\end{itemize}

Further:

\begin{itemize}
	\item $\lim\limits_{t\rightarrow 0}\varphi(t)=x$ and $\lim\limits_{t\rightarrow 0}\varphi'(t)=x'$;
	
	\item $\lim\limits_{t\rightarrow 0}(\gamma(t)+\gamma'(t))=v$;
	
	\item $\lim\limits_{t\rightarrow 0}(\varphi_{\ell}(t)-\varphi'_{\ell}(t))\gamma'(t)=w_{\ell}, \forall \ell\in\{1,\hdots,n\}$. \hspace{2cm}$(\star)$
\end{itemize}

Since $x_i\neq x_i'$ then $\varphi_i(t)-\varphi_i'(t)\neq 0$ for all $t$ sufficiently closed to $0$.
	
	(a) Define $$\beta(t):=(\varphi(t),\varphi'(t),[(\varphi_i(t)-\varphi_i'(t))(\gamma(t)+\gamma'(t))-(\varphi_i(t)-\varphi_i'(t))\gamma'(t),(\varphi_i(t)-\varphi_i'(t))\gamma'(t)]).$$
	
	Clearly $$(x,x',[(x_i-x_i')v-w_i,w_i])=\lim\limits_{t\rightarrow 0}\beta(t).$$
	
	In the other hand, $\beta(t)=(\varphi(t),\varphi'(t),[\gamma(t),\gamma'(t)])\in\cU(\tM)$, for all $t$ close to $0$. Hence, $(x,x',[(x_i-x_i')v-w_i,w_i])\in\Projan(\cRtM)_{(x,x')}$.
	
	(b) It is a consequence of the equation $(\star)$.
	
	(c) It follows immediately from (b).
\end{proof}

\begin{lemma}\label{L2.5}
	Let $x,x'\in X$ and suppose $x\neq x'$. If $(x,x',[u,u'])\in\Projan(\cRtM)_{(x,x')}$ then \begin{center}$(x,x',[u+u',(x_1-x_1')u',\hdots,(x_n-x_n')u'])\in\Projan(\cRMD)_{(x,x')}.$\end{center}
\end{lemma}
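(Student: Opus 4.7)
The plan is to prove this lemma by reversing the kind of limit argument used in Lemma \ref{L2.4}(a). Rather than extracting a lift back to $\Projan(\cRtM)$, I will push a convergent family in $\cU(\tM)$ forward through the natural formula and identify the limit in $\Projan(\cRMD)$ using the first characterization of Proposition \ref{P2.1}.

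First I would apply Proposition \ref{P2.1} to the assumption $(x,x',[u,u'])\in\Projan(\cRtM)_{(x,x')}$ to obtain a curve $\Phi(t)=(\varphi(t),\varphi'(t),[\gamma(t),\gamma'(t)])$ in $\cU(\tM)$ converging to $(x,x',[u,u'])$ as $t\to 0$, with $\varphi(t)\ne\varphi'(t)$, both $[\cM(\varphi(t))]$ and $[\cM(\varphi'(t))]$ of maximal rank, and $\gamma(t)\in\mathrm{row}[\cM(\varphi(t))]$, $\gamma'(t)\in\mathrm{row}[\cM(\varphi'(t))]$ for $t$ near $0$. Then I would define
\[
\Psi(t):=\bigl(\varphi(t),\varphi'(t),[\gamma(t)+\gamma'(t),(\varphi_1(t)-\varphi_1'(t))\gamma'(t),\ldots,(\varphi_n(t)-\varphi_n'(t))\gamma'(t)]\bigr),
\]
and argue that $\Psi(t)$ belongs to the set describing $\cU(\cM_D)$ in the first part of Proposition \ref{P2.1}: indeed, the triple $(\varphi(t),\varphi'(t))$ already satisfies $\varphi(t)\ne\varphi'(t)$ and the two maximal-rank conditions, and the row vectors are $u=\gamma(t)$, $u'=\gamma'(t)$. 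Taking $t\to 0$ gives exactly $(x,x',[u+u',(x_1-x_1')u',\ldots,(x_n-x_n')u'])$, so this point lies in $\Projan(\cRMD)_{(x,x')}$.

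The only real subtlety is to check that the candidate projective coordinate vector is not the zero vector, either for small $t\ne 0$ (so that $\Psi(t)$ really lies in $\cU(\cM_D)$) or at $t=0$ (so that the limit is a genuine point of $X\times X\times\bP^{2r-1}$). Since $[u,u']\in\bP^{2r-1}$, either $u'\ne 0$ or $u\ne 0$. In the first case, I pick some index $i$ with $x_i\ne x_i'$ (which exists since $x\ne x'$), so that $(x_i-x_i')u'\ne 0$ and, by continuity, $(\varphi_i(t)-\varphi_i'(t))\gamma'(t)\ne 0$ for $t$ small. In the second case, $u+u'=u\ne 0$, and again by continuity $\gamma(t)+\gamma'(t)\ne 0$ for $t$ small. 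Either way the projective coordinates never collapse, so $\Psi(t)$ and its limit are well-defined.

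The main obstacle I expect is exactly this bookkeeping of non-vanishing in projective space; everything else is a direct unwinding of Proposition \ref{P2.1}. One minor additional care point is the choice of representatives: the convergence $[\gamma(t),\gamma'(t)]\to[u,u']$ in $\bP^{2r-1}$ must be upgraded to a choice of representatives with $(\gamma(t),\gamma'(t))\to(u,u')$ in $\bC^{2r}$, which is standard.
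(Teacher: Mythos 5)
Your proposal is correct and follows essentially the same route as the paper: pull back a convergent curve in $\cU(\tM)$ via Proposition \ref{P2.1}, push it forward through the formula $(\gamma+\gamma',(\varphi_1-\varphi_1')\gamma',\hdots,(\varphi_n-\varphi_n')\gamma')$, check it lies in $\cU(\cM_D)$, and pass to the limit. The only tiny quibble is that $\varphi(t)\neq\varphi'(t)$ is not part of the description of $\cU(\tM)$ in Proposition \ref{P2.1} but follows for small $t$ by continuity from $x\neq x'$, which you effectively use anyway.
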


\begin{proof}
	By Proposition \ref{P2.1} we can write $$(x,x',[u,u'])=\lim\limits_{t\rightarrow 0}\Phi(t)$$
	\noindent where $\Phi(t)=(\varphi(t),\varphi'(t),[\gamma(t),\gamma'(t)])$ and for all $t$ close to $0$:
	
		\begin{itemize}
		\item $[\cM(\varphi(t))]$ and $[\cM(\varphi'(t))]$ have maximal rank;
		
		\item $\gamma(t)\in\mbox{row}[\cM(\varphi(t))]$ and $\gamma'(t)\in\mbox{row}[\cM(\varphi'(t))]$.
		\end{itemize}
		
		Further: 
		
		\begin{itemize}
			\item $\lim\limits_{t\rightarrow 0}\varphi(t)=x$ and $\lim\limits_{t\rightarrow 0}\varphi'(t)=x'$;
			
			\item $\lim\limits_{t\rightarrow 0}\gamma(t)=u$ and $\lim\limits_{t\rightarrow 0}\gamma'(t)=u'$.
		\end{itemize}
	
	Consider the curve $$\beta(t):=(\gamma(t)+\gamma'(t),[(\varphi_1(t)-\varphi_1'(t))\gamma'(t),\hdots ,(\varphi_n(t)-\varphi_n'(t))\gamma'(t)]).$$
	
	Clearly $$(x,x',[u+u',(x_1-x_1')u',\hdots,(x_n-x_n')u'])=\lim\limits_{t\rightarrow 0}\beta(t)$$
	
	\noindent and Proposition \ref{P2.1} implies that $\beta(t)\in\cU(\cM_D)$, for all $t$ close to $0$.	
\end{proof}

In order to describe the fiber of $\Projan(\cRMD)$ at a point $(x,x')$ off the diagonal in terms of the fiber of $\Projan(\cRtM)$ at this point, and the fibers of $\Projan(\cRM)$ at these two points, we use the \textit{join} of two topological spaces. 

Given $A,B$ topological spaces, we use the classical notation $A\ast B$ to denote the join of these topological spaces, which is, intuitively, the set of lines connecting points of $A$ to $B$. Each element of $A\ast B$ can be written as a formal linear combination $s\mathbf{a}+t\mathbf{b}$, where $\mathbf{a}\in A$, $\mathbf{b}\in B$, and $s,t$ belong to the real closed interval $[0,1]$ and $s+t=1$.

Consider the diagram of canonical inclusions \begin{tikzpicture}
	\matrix (m) [matrix of math nodes,row sep=0.6em,column sep=0.2em,minimum width=0.3em]
	{
	A	&  & B\\
	  & A\ast B   &    \\};
	\path[-stealth]
	(m-1-1) edge node [right] {}  (m-2-2)
	(m-1-3) edge node [right] {}  (m-2-2);
	\end{tikzpicture}. It is well known that this diagram is the homotopy colimit of the diagram of canonical projections \begin{tikzpicture}
	\matrix (m) [matrix of math nodes,row sep=0.2em,column sep=1em,minimum width=0.1em]
	{
		A	& A\times B & B\\};
	\path[-stealth]
	(m-1-2) edge node [right] {}  (m-1-1)
	(m-1-2) edge node [right] {}  (m-1-3);
	\end{tikzpicture}.

\begin{theorem}
	Let $\cM$ be a sheaf of $\cO_X$-submodules of $\cOXp$. Suppose $x,x'\in X$, $x\neq x'$. Then:
	\begin{enumerate}
	
	 \item [a)]$\Projan(\cRMD)_{(x,x')}\cong \Projan(\cRtM)_{(x,x')}$;
	 
	 \item [b)] $\Projan(\cRtM)_{(x,x')}\cong \Projan(\cRM)_x\ast \Projan(\cRM)_{x'}.$
	\end{enumerate}
\end{theorem}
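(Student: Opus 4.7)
\textbf{Part (a).} Lemmas \ref{L2.4} and \ref{L2.5} already provide the two directions of the homeomorphism; the plan is to assemble them into mutually inverse continuous maps. Set
\[
\Psi\colon [u,u'] \longmapsto [u+u',\,(x_1-x_1')u',\,\ldots,\,(x_n-x_n')u'],
\]
which is well-defined on projective classes by the Remark after Proposition \ref{P2.1} and lands in $\Projan(\cRMD)_{(x,x')}$ by Lemma \ref{L2.5}. Fix any index $i$ with $x_i \neq x_i'$ and set
\[
\Phi\colon [v,w_1,\ldots,w_n] \longmapsto [(x_i-x_i')v - w_i,\, w_i];
\]
this lands in $\Projan(\cRtM)_{(x,x')}$ by Lemma \ref{L2.4}(a), and Lemma \ref{L2.4}(c) guarantees independence of the choice of $i$. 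A direct expansion gives $\Phi\circ\Psi=\mathrm{id}$; for $\Psi\circ\Phi$, Lemma \ref{L2.4}(b) yields the identity $(x_j-x_j')w_i=(x_i-x_i')w_j$, after which the common scalar $(x_i-x_i')$ cancels projectively. Both maps are given by polynomial expressions in the ambient coordinates, so continuity is automatic.

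\textbf{Part (b).} Proposition \ref{P2.1} identifies $\Projan(\cRtM)_{(x,x')}$ as the closure in $\bP^{2r-1}$ of the set $\{[u,u']:[u]\in\Projan(\cRM)_x,\ [u']\in\Projan(\cRM)_{x'}\}$. The canonical embeddings $[u]\mapsto[u,0]$ and $[u']\mapsto[0,u']$ place $\Projan(\cRM)_x$ and $\Projan(\cRM)_{x'}$ into complementary linear subspaces of $\bP^{2r-1}$, so each pair $([u],[u'])$ determines a projective line joining the two images. Using the homotopy-pushout characterization of the join recalled just before the theorem, this data produces a natural map
\[
\Projan(\cRM)_x\ast\Projan(\cRM)_{x'} \longrightarrow \Projan(\cRtM)_{(x,x')},\qquad s[u]+t[u'] \longmapsto [su,tu'].
\]
Surjectivity is Proposition \ref{P2.1}, and injectivity (modulo the defining relations of the join) follows from the fact that $[su,tu']$ recovers $([u],[u'],[s:t])$ uniquely away from the boundary strata $s=0$ and $t=0$, which are exactly the endpoint collapses built into the join.

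\textbf{Main obstacle.} Part (a) is essentially bookkeeping once Lemmas \ref{L2.4} and \ref{L2.5} are in hand. The crux is part (b): the scalar ambiguity inherent in the projective classes $[u]$ and $[u']$ must be reconciled with the scaling parameter $[s:t]$, so one has to argue that $s[u]+t[u'] \mapsto [su,tu']$ is actually well-defined on the join and yields a homeomorphism rather than merely a continuous surjection. The boundary collapses at $s=0$ and $t=0$, together with the passage from $\cU(\tM)$ to its closure, require the most careful analysis.
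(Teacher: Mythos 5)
Your part (a) is essentially the paper's own proof: your $\Psi$ and $\Phi$ are the paper's $\Gamma$ and $\Lambda$, justified by Lemma \ref{L2.5}, Lemma \ref{L2.4}(a),(c) for well-definedness and independence of $i$, and Lemma \ref{L2.4}(b) for $\Psi\circ\Phi=\mathrm{id}$, so that half is fine.

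The genuine gap is in part (b), and it is exactly the point you label the ``main obstacle'' and then leave unresolved. The assignment $s[u]+t[u']\mapsto [su,tu']$ is not well defined on the join: replacing $u$ by $\lambda u$ with $\lambda\in\bC\setminus\{0\}$ leaves the join element $s[u]+t[u']$ unchanged but changes $[su,tu']$ to $[s\lambda u,tu']$, a different point of $\bP^{2r-1}$ whenever $s,t\neq 0$, and no choice of representatives removes this ambiguity. Relatedly, for fixed $[u],[u']$ the points of the fiber of the form $[su,tu']$ sweep out a whole complex projective line (parametrized by $[s:t]\in\bP^{1}$), whereas the corresponding join elements form only a real segment; so the map as you define it cannot be the claimed bijection, and the statement that $[su,tu']$ ``recovers $([u],[u'],[s:t])$ uniquely'' does not survive the scaling ambiguity. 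A second, smaller gap: ``surjectivity is Proposition \ref{P2.1}'' conflates the fiber over $(x,x')$ of the closure $\Projan(\cRtM)$ with the closure of the fiber of $\cU(\tM)$ over $(x,x')$; when $x$ or $x'$ lies in $\Sigma(\cM)$ these need not coincide without a limiting-curve argument of the kind used in Lemmas \ref{L2.4} and \ref{L2.5}. Note that the paper's route for (b) is different and avoids choosing representatives altogether: it introduces the inclusions $i_x\colon (x,[u])\mapsto (x,x',[u,0])$ and $i_{x'}\colon (x',[u'])\mapsto (x,x',[0,u'])$ and identifies $\Projan(\cRtM)_{(x,x')}$ with $\Projan(\cRM)_x\ast\Projan(\cRM)_{x'}$ via the homotopy-colimit characterization of the join recalled just before the theorem, rather than via an explicit pointwise parametrization; if you insist on a concrete map you must work with the full $\bC^{*}$-scaling (projective lines joining the two embedded fibers), not with real parameters $s,t\in[0,1]$.
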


\begin{proof}
	(a )Since $x\neq x'$ then $x_i\neq x_i'$, for some $i$. By Lemma \ref{L2.5}, the map 
	
	\begin{center}
		$\begin{matrix}
			\Gamma: &  \Projan(\cRtM)_{(x,x')} & \longrightarrow & \Projan(\cRMD)_{(x,x')}\\
			 &   (x,x',[u,u']) & \longmapsto &  (x,x',[u+u',(x_1-x_1')u',\hdots,(x_n-x_n')u'])\\
		\end{matrix}$
	\end{center}
\noindent is well defined and Lemma \ref{L2.4} (a) ensures the map 

\begin{center}
	$\begin{matrix}
	\Lambda: & \Projan(\cRMD)_{(x,x')}  & \longrightarrow &\Projan(\cRtM)_{(x,x')} \\
	&   (x,x',[v,w_1,\hdots,w_n]) & \longmapsto &  (x,x',[(x_i-x_i')v-w_i,w_i])\\
	\end{matrix}$
\end{center}

\noindent is well defined (and it does not depend on $i$ such that $x_i\neq x_i'$). It is easy to verify that $\Lambda\circ\Gamma=\mbox{id}$ and one can use Lemma \ref{L2.4}(b) to conclude that $\Gamma\circ\Lambda=\mbox{id}$.

(b) Consider the inclusions

\begin{center}
	$\begin{matrix}
		i_x: &  \Projan(\cRM)_x &  \rightarrow & \Projan(\cRtM)_{(x,x')}\\
		      &   (x,[u])   & \mapsto  &  (x,x',[u,0])
	\end{matrix}$
\end{center} and \begin{center}
$\begin{matrix}
i_{x'}: &  \Projan(\cRM)_{x'} &  \rightarrow & \Projan(\cRtM)_{(x,x')}\\
&   (x',[u'])   & \mapsto  &  (x,x',[0,u'])
\end{matrix}$
\end{center}

Then the diagram 
\begin{center}
\begin{tikzpicture}
\matrix (m) [matrix of math nodes,row sep=2em,column sep=0.1em,minimum width=0.1em]
{
	\Projan(\cRM)_x	&   & \Projan(\cRM)_{x'}\\
	 & \Projan(\cRtM)_{(x,x')}  &  \\};
\path[-stealth]
(m-1-1) edge node [below] {$i_x$}  (m-2-2)
(m-1-3) edge node [below] {$i_{x'}$}  (m-2-2);
\end{tikzpicture}
\end{center}\noindent is a homotopy colimit of the diagram of canonical projections.

\noindent \begin{tikzpicture}
\matrix (m) [matrix of math nodes,row sep=0.2em,column sep=1em,minimum width=0.1em]
{
	\Projan(\cRM)_{x}	& \Projan(\cRM)_{x}\times \Projan(\cRM)_{x'} & \Projan(\cRM)_{x'}\\};
\path[-stealth]
(m-1-2) edge node [right] {}  (m-1-1)
(m-1-2) edge node [right] {}  (m-1-3);
\end{tikzpicture}

Therefore, $\Projan(\cRtM)_{(x,x')}\cong \Projan(\cRM)_x\ast \Projan(\cRM)_{x'}.$
\end{proof}

Suppose $\cM$ is a sheaf of $\cO_X$-submodules of $\cOXp$ generated by global sections $g=\{g_1,\hdots,g_r\}$. We define the \textit{reduced double} of $\cM$ with respect to $g$ as the sheaf of $\cO_{X\times X}$-submodules $\cM_{D-}$ of $\cO_{X\times X}^{2p}$ generated by $\{(g_1)_D,\hdots,(g_r)_D\}$. 

In this way, we define $\cM_{\widehat{D}}:=\cM_{D-}+(0\oplus \pi_2^*(\cM))\subseteq\cO_{X\times X}^{2p}$, which is called the \textit{extended double of} $\cM$ with respect to $g$. Clearly, $$\cM_{D-}\subseteq\cM_D\subseteq \cM_{\widehat{D}}\subseteq \tM.$$

\begin{proposition}
	$\Projan(\cR(\tM))$ is naturally isomorphic to $\Projan(\cR(\cM_{\widehat{D}}))$, i.e, there exists an isomorphism $\Projan(\cR(\tM))\rightarrow\Projan(\cR(\cM_{\widehat{D}}))$ such that the following diagram is commutative: 
	
	\begin{center}
	 \begin{tikzpicture}
	\matrix (m) [matrix of math nodes,row sep=2.5em,column sep=1em,minimum width=0.1em]
	{
		\Projan(\cR(\tM))	&  & \Projan(\cR(\cM_{\widehat{D}}))\\
		X\times X  &      &  X\times X  \\};
	\path[-stealth]
	(m-1-1) edge node [left] {$\pi_{\tM}$}  (m-2-1)
	(m-1-1) edge node [above] {$\sim$}  (m-1-3)
	(m-2-1) edge node [below] {$\mbox{id}$}  (m-2-3)
	(m-1-3) edge node [right] {$\pi_{\cM_{\widehat{D}}}$}  (m-2-3);
	\end{tikzpicture}
	\end{center}
\end{proposition}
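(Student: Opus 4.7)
The plan is to recognize that $\cM_{\widehat{D}}$ and $\tM$ are in fact \emph{equal} as $\cO_{X\times X}$-submodules of $\cO_{X\times X}^{2p}$, so that the stated isomorphism arises purely from a linear change of generators. The inclusion $\cM_{\widehat{D}}\subseteq\tM$ is already recorded in the paper. For the reverse inclusion, the key observation is the identity
\[
(g_i\circ\pi_1,\,0)\;=\;(g_i)_D-(0,\,g_i\circ\pi_2),
\]
which expresses each generator of $\pi_1^*(\cM)\oplus 0$ as a difference of an element of $\cM_{D-}$ and an element of $0\oplus\pi_2^*(\cM)$, both of which lie in $\cM_{\widehat{D}}$ by definition. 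Combined with $0\oplus\pi_2^*(\cM)\subseteq\cM_{\widehat{D}}$, this gives $\tM=\pi_1^*(\cM)\oplus\pi_2^*(\cM)\subseteq\cM_{\widehat{D}}$.

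Next, I would record the two natural ordered sets of $2r$ generators. The module $\cM_{\widehat{D}}$ is generated by $e_i:=(g_i)_D$ and $e_{r+i}:=(0,g_i\circ\pi_2)$, while $\tM$ is generated by $f_i:=(g_i\circ\pi_1,0)$ and $f_{r+i}:=(0,g_i\circ\pi_2)$, for $i=1,\ldots,r$. The relations $f_i=e_i-e_{r+i}$ and $f_{r+i}=e_{r+i}$ show that $[\tM]=[\cM_{\widehat{D}}]\cdot P$, where
\[
P=\begin{pmatrix} I_r & 0 \\ -I_r & I_r \end{pmatrix}\in GL_{2r}(\bZ)
\]
is a constant invertible matrix. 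Both Projans therefore live in the same ambient space $X\times X\times\bP^{2r-1}$.

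The rest is then essentially formal. The linear automorphism $\Phi$ of $X\times X\times\bP^{2r-1}$ defined by $\Phi\bigl((x,x'),[\ell]\bigr)=\bigl((x,x'),[\ell\cdot P]\bigr)$ is a biregular isomorphism that fixes the $X\times X$-factor. Since right multiplication by the invertible matrix $P$ carries $\mathrm{row}[\cM_{\widehat{D}}(x,x')]$ bijectively onto $\mathrm{row}[\tM(x,x')]$ and preserves the maximal rank condition, $\Phi$ restricts to a bijection $\cU(\cM_{\widehat{D}})\to\cU(\tM)$. Taking closures in $X\times X\times\bP^{2r-1}$ produces the desired isomorphism, and commutativity with the two projections to $X\times X$ is immediate from the fact that $\Phi$ acts trivially on the first two factors.

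The main (mild) issue to watch is direction: the statement requires a map $\Projan(\cR(\tM))\to\Projan(\cR(\cM_{\widehat{D}}))$, which is obtained by passing to $\Phi^{-1}$, associated to $P^{-1}=\left(\begin{smallmatrix}I_r & 0 \\ I_r & I_r\end{smallmatrix}\right)$. Beyond that, there is no genuine obstacle: the argument does not use anything deeper than the identity $\cM_{\widehat{D}}=\tM$ together with the standard fact that a constant change-of-generators matrix induces a linear isomorphism of the ambient projective fibers that is compatible with closures and with the canonical projection.
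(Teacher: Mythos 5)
Your proof is correct, and at bottom it produces the same isomorphism as the paper: right multiplication by $P^{-1}=\left(\begin{smallmatrix} I_r & 0\\ I_r & I_r\end{smallmatrix}\right)$ is exactly the paper's map $\Gamma(x,x',[u,u'])=(x,x',[u+u',u'])$, and right multiplication by $P$ is the paper's $\Lambda(x,x',[u,u'])=(x,x',[u-u',u'])$. The difference is in how well-definedness is justified: the paper checks at points of $\cU(\cM_{\widehat{D}})$ that $(u,u')\in\mbox{row}[\cM_{\widehat{D}}(x,x')]$ forces $u-u'\in\mbox{row}[\cM(x)]$ and $u'\in\mbox{row}[\cM(x')]$, and then extends to the closure ``by taking limits,'' whereas you note the matrix identity $[\tM]=[\cM_{\widehat{D}}]\cdot P$ with $P$ constant and invertible, so the projective-linear automorphism $(x,x',[\ell])\mapsto(x,x',[\ell P])$ of $X\times X\times\bP^{2r-1}$ carries $\cU(\cM_{\widehat{D}})$ bijectively onto $\cU(\tM)$ (ranks agree pointwise since $P$ is invertible) and, being a homeomorphism over $X\times X$, carries one Projan onto the other; this handles the closure step and the compatibility with $\pi_{\tM}$ and $\pi_{\cM_{\widehat{D}}}$ more transparently than the paper's limit argument. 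Your preliminary observation that $\cM_{\widehat{D}}=\tM$ as submodules, via $(g_i\circ\pi_1,0)=(g_i)_D-(0,g_i\circ\pi_2)$, sharpens the paper's stated inclusion $\cM_{\widehat{D}}\subseteq\tM$ and explains conceptually why the two Projans agree up to a change of generators, although strictly only the identity $[\tM]=[\cM_{\widehat{D}}]P$ is needed for the argument.
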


\begin{proof}
	Let $w_1,\hdots,w_p$ be the rows of $[\cM]$. Then we can write \begin{center} 	
	$[\cM_{\widehat{D}}]=\left[\begin{matrix}
	 w_1 & \mid &  0   \\
	 \vdots  & \mid     &   \vdots \\
	  w_p & \mid &  0 \\
	   w_1' & \mid & w_1'\\ 
	   \vdots  & \mid     &   \vdots \\
	    w_p' & \mid &  w_p'\\
	\end{matrix}\right]$ and $[\tM]=\left[\begin{matrix}
	w_1 & \mid &  0   \\
	\vdots  & \mid     &   \vdots \\
	w_p & \mid &  0 \\
	0 & \mid &  w_1'\\ 
	\vdots  & \mid     &   \vdots \\
	0 & \mid &  w_p'\\
	\end{matrix}\right]$.\end{center}
	
	Notice that if $(x,x',[u,u'])\in\cU(\cM_{\widehat{D}})$ then $[\cM_{\widehat{D}}(x,x')]$ has maximal rank and $(u,u')\in\mbox{row}[\cM_{\widehat{D}}(x,x')]$. Thus, $[\cM(x)]$ and $[\cM(x')]$ have maximal rank. It is easy to conclude that $u-u'\in\mbox{row}[\cM(x)]$ and $u'\in\mbox{row}[\cM(x')]$. Hence, taking limits, the map 
	
	\begin{center}
	$	\begin{matrix}
		\Lambda:  &  \Projan(\cR(\cM_{\widehat{D}})) & \rightarrow  & \Projan(\cRtM)\\	
		   &   (x,x',[u,u']) & \mapsto  & (x,x',[u-u',u'])
		\end{matrix}$ 
	\end{center}\noindent is well defined. Analogously, the map

	\begin{center}
	$	\begin{matrix}
	\Gamma: &\Projan(\cRtM) &    \rightarrow  &\Projan(\cR(\cM_{\widehat{D}}))\\	
	&   (x,x',[u,u']) & \mapsto  & (x,x',[u+u',u'])
	\end{matrix}$ 
\end{center}\noindent is well defined. Hence $\Lambda\circ\Gamma=\mbox{id}$, $\Gamma\circ\Lambda=\mbox{id}$ and $\pi_{\cM_{\widehat{D}}}\circ \Gamma=\pi_{\tM}$.
\end{proof}

If we consider the sheaf of ideals $\cF$ on $\Projan(\cR(\cM_{\widehat{D}}))$ generated by $$\{T_1,\hdots ,T_r\}\cup\{(z_i-z_i')T_j'\mid i\in\{1,\hdots,n\}\mbox{ and }j\in\{1,\hdots,r \}\},$$\noindent  where $T_1,\hdots,T_r,T_1',\hdots,T_r'$ are the homogeneous coordinates on $\bP^{2r-1}$, then we can consider the blowup $B_{\cF}(\Projan(\cR(\cM_{\widehat{D}})))$. As one can see in \cite{KT}, the inclusion of graded $\cO_{X\times X}$-algebras $\cR(\cM_D)\hookrightarrow\cR(\cM_{\widehat{D}})$ induces a commutative diagram

\begin{center}
	 \begin{tikzpicture}
	\matrix (m) [matrix of math nodes,row sep=4em,column sep=3.5em,minimum width=0.5em]
	{
	B_{\cF}(\Projan(\cR(\cM_{\widehat{D}})))	& \Projan\cR(\cM_{\widehat{D}}) & \Projan\cR(\tM)\\
		\Projan\cRMD &      X\times X  &  \\};
	\path[-stealth]
	(m-1-1) edge node [above] {$\pi_{\cF}$}  (m-1-2)
	(m-1-2) edge node [above] {$\sim$}  (m-1-3)
	(m-1-1) edge node [left] {$\mathbf{p}$}  (m-2-1)
	(m-2-1) edge node [below] {$\pi_{\cM_D}$}  (m-2-2)
	(m-1-3) edge node [right] {$\pi_{\tM}$}  (m-2-2)
	(m-1-2) edge [dashed] node [right] {}  (m-2-1)
	(m-1-2) edge node [right] {$\pi_{\cM_{\widehat{D}}}$}  (m-2-2);
	\end{tikzpicture}
\end{center}

\begin{remark}\normalfont 
	Notice that $\pi_{\cM_{\widehat{D}}}(V(\cF))\subseteq \Delta(X)$. In fact, if $(x,x',[u,u'])\in V(\cF)$, then $T_j(u)=0$ and $(x_i-x_i')T_j'(u')=0$, $\forall i,j$. So $u=0$ and necessarily $u'\neq 0$, thus $T_j'(u')\neq 0$, for some $j$. Hence, $x_i-x_i'=0$, for every $i$ and $x=x'$. In particular, $\mathbf{p}$ takes the exceptional divisor of $B_{\cF}(\Projan(\cR(\cM_{\widehat{D}})))$ in the exceptional fiber of $\Projan\cRMD $.
	
	An element of $V(T_1,\hdots,T_r)\subseteq\Projan\cR(\cM_{\widehat{D}})$ can be written on the form $(x,x',[0,u'])$. If we assume that $[\cM(x)]$ and $[\cM(x')]$ have maximal rank and $(0,u')\in\mbox{row}[\cM_{\widehat{D}}(x,x')]$ then $u'\in\mbox{row}[\cM(x)]\cap\mbox{row}[\cM(x')]$. In the case where $\cM=JM(X)$, the element $(x,x',[0,u'])$ satisfies $TX_x\subseteq H_{u'}$ and $TX_{x'}\subseteq H_{u'}$, i.e, $H_{u'}$ is a common tangent hyperplane to $x$ and $x'$.
\end{remark}

\section{The fiber of the Projan over the origin for curves in hypersurfaces}

Our approach is to work on irreducible curves $C$ in $X$ and calculate the fiber over the origin in $C\times C$ of the restriction of $\mbox{Projan}(\cR((JM(X))_D))$  to $C\times C$.

The computation is completed for $X$ an irreducible plane curve. Some partial results are obtained.

Given an irreducible curve $C$ on $X$, it has a normalization $\eta:\bC\rightarrow C$. If $\Phi:(\bC,0)\rightarrow (C,0)$ is any map then $\Phi=\eta\circ\varphi$ for some $\varphi:(\bC,0)\rightarrow (\bC,0)$. If $C$ is irreducible then $\eta$ is $1-1$ and an embedding except at $0$. So the fiber of $\mbox{Projan}(\cR((JM(X))_D))$ over $(0,0)$on $C\times C$ is isomorphic to the fiber over $(0,0)$ of the pullback to $\bC\times\bC$.

\begin{lemma}
	Suppose $\Phi:(\bC,0)\rightarrow C\times C$ and $\Phi=(\Phi_1,\Phi_2)$. Then $\Phi=(\eta\circ\varphi_1,\eta\circ\varphi_2)$ for some $\varphi_1,\varphi_2:(\bC,0)\rightarrow(\bC,0)$.
\end{lemma}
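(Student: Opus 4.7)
The statement decouples into two independent claims — one for each component $\Phi_i:(\bC,0)\to(C,0)$, $i=1,2$ — so the plan is to prove that any holomorphic germ $\Phi_i:(\bC,0)\to(C,0)$ factors as $\eta\circ\varphi_i$ for some germ $\varphi_i:(\bC,0)\to(\bC,0)$. Morally this is the universal property of the normalization: the source $(\bC,0)$ is smooth, hence normal, so any map from it to $(C,0)$ lifts uniquely through the normalization $\eta$. My preferred route is to invoke this universal property directly, but the construction can be made explicit as follows.

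For the explicit argument, I use that $\eta$ is finite (hence proper), injective, and a biholomorphism off the origin. If $\Phi_i$ vanishes identically near $0$, take $\varphi_i\equiv 0$; otherwise $\Phi_i^{-1}(0)$ is a discrete subset of a neighborhood of $0$ in $\bC$. Off this discrete set, $\Phi_i$ takes values in the smooth locus $C\setminus\{0\}$, so I define $\varphi_i := \eta^{-1}\circ\Phi_i$ there, which is holomorphic because $\eta$ restricts to a biholomorphism onto $C\setminus\{0\}$. Since $\eta$ is proper, the preimage of a small neighborhood of $0\in C$ is contained in a small neighborhood of $0\in\bC$, so $\varphi_i$ is bounded near every point of $\Phi_i^{-1}(0)$. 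Riemann's removable singularity theorem then extends $\varphi_i$ holomorphically across these isolated points, and continuity together with the injectivity $\eta^{-1}(0)=\{0\}$ forces $\varphi_i$ to send $\Phi_i^{-1}(0)$ — in particular $0$ — to $0$.

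The only delicate point, and hence the main obstacle, is the extension of $\varphi_i$ across the points where $\Phi_i$ hits the singular value $0\in C$; elsewhere the factorization is automatic since $\eta$ is a local biholomorphism. This is handled by properness of $\eta$ (to get boundedness) plus Riemann's removable singularity theorem (to get the holomorphic extension). Applying the argument to each of the two components produces $\varphi_1,\varphi_2:(\bC,0)\to(\bC,0)$ with $\Phi=(\eta\circ\varphi_1,\eta\circ\varphi_2)$, as required.
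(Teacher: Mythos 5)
Your proposal is correct and takes essentially the same route as the paper: split $\Phi$ into its two components and factor each $\Phi_i:(\bC,0)\to(C,0)$ through the normalization $\eta$. The paper simply invokes this factorization property (stated just before the lemma) and declares the rest obvious, whereas you additionally prove the factorization itself via properness of $\eta$ and Riemann's removable singularity theorem; that added detail is sound and does not change the argument.
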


\begin{proof}
	It is obvious once each $\Phi_i$ factors through $\eta$.
\end{proof}

Now $JM(X)_D$ has $2$ natural submodules: $JM(X)_{D-}$ which is the submodule generated by the doubles of the partial derivatives of $F$, where $X=f^{-1}(0)$, and $JI_{\Delta}:=0\oplus I_{\Delta}JM(X)$.

The fiber of $\mbox{Projan}(\cR(JI_{\Delta}))$ over the origin consists of pairs $(H,\ell)$ where $H$ is a limiting tangent hyperplane and $\ell$ is a limiting secant line realized using a curve on which $H$ is the limiting tangent hyperplane and any other curve. The fiber of $\mbox{Projan}(\cR(JM(X)_{D-}))$ we describe in the case where $X$ is a hypersurface. We need the notion of the second intrinsec derivative of $X$ to do this. 
                                                                                                     
Suppose $X=f^{-1}(0)$, $x\in X$ a smooth point of $X$; then the second intrinsec derivative of $f$ is the map $$\hat{D}_2f_x:k_x\rightarrow C_x$$ where $k_x:=\ker(Df_x)$, $C_x:=\mbox{coker}(Df_x)$ and $\hat{D}_2f_x$ defined by restricting $D_2f$ to $k_x$ and projecting to $C_x$. We can also view $\hat{D}_2f_x$ as a map from $k_x$ to $\bC^n$ but then it is only defined up to multiples of $\nabla f(x)$.

\begin{theorem}\label{T1}
	Suppose $C\subset X$ is an irreducible curve in a hypersurface $X$. Then the fiber of $\mbox{Projan}(\cR((J(X)_{D-})\mid_{ C\times C}))$ over the origin contains $\bP(Span(H,L))$, where $H$ is limiting conormal of $X$ along $C$ and $L$ is the limit of $\hat{D}_2f_x\mid_{TC_x}$. 
\end{theorem}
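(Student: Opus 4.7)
The plan is to exhibit an explicit analytic one-parameter family of pairs of points of $C$, smooth on $X$ and tending to $(0,0)$, along which the row spaces of $[JM(X)_{D-}]$ converge, as $2$-planes in $\bC^n$, to $\text{Span}(H,L)$. Projectivizing and invoking the closure definition of the Projan will then yield $\bP(\text{Span}(H,L))\subseteq\Projan(\cR(JM(X)_{D-}|_{C\times C}))_{(0,0)}$.

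First I would pull back along the normalization $\eta:(\bC,0)\to(C,0)$; as observed just before the preceding lemma, this does not alter the fiber at $(0,0)$. Since $X$ is a hypersurface defined by $f$, a matrix of generators of $JM(X)_{D-}$ is the $2\times n$ matrix
\[
[JM(X)_{D-}] \;=\; \begin{bmatrix}\nabla f\circ\pi_1\\[2pt] \nabla f\circ\pi_2\end{bmatrix},
\]
and Proposition \ref{P2.1} identifies the fiber of $\cU(JM(X)_{D-})$ over a pair $(\eta(s),\eta(t))$ with $\eta(s)\neq\eta(t)$ and both smooth on $X$ as $\bP(\text{Span}(\nabla f(\eta(s)),\nabla f(\eta(t))))$.

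Next I write the analytic expansion $\nabla f(\eta(u))=u^{a}A+u^{a+1}B+O(u^{a+2})$ with $A\neq 0$, so that $H=[A]$. A direct computation shows
\[
\tfrac{d}{du}\nabla f(\eta(u))-\tfrac{a}{u}\nabla f(\eta(u))\;=\;u^{a}B+O(u^{a+1}),
\]
and since the left-hand side equals $D_2f_{\eta(u)}(\eta'(u),\cdot)$ modulo $\nabla f(\eta(u))$, this identifies $L=[B\bmod H]$. For the test curve $(s,t)=(u,2u)$ the points $\eta(u)$ and $\eta(2u)$ are, for small $u\neq 0$, distinct and smooth on $X$; a one-step row reduction on $\begin{bmatrix}\nabla f(\eta(u))\\[2pt] \nabla f(\eta(2u))\end{bmatrix}$ (subtract $2^{a}$ times the first row from the second, then divide by $u$) produces, after scaling by $u^{-a}$, a basis of the form $(A+O(u),\,2^{a}B+O(u))$, whose span converges to $\text{Span}(A,B)=\text{Span}(H,L)$ as $u\to 0$.

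Finally, for any $P\in\bP(\text{Span}(H,L))$ I pick a continuous lift $P(u)\in\bP(\text{Span}(\nabla f(\eta(u)),\nabla f(\eta(2u))))$ with $P(u)\to P$; then $(\eta(u),\eta(2u),P(u))\in\cU(JM(X)_{D-})$ for $u$ small nonzero, and its limit $(0,0,P)$ lies in the fiber by definition of the Projan as the closure of $\cU$. The main subtle point of this argument is the identification $L=[B\bmod H]$ through the second intrinsic derivative; the rest is linear algebra and a standard closure argument, with the degenerate case $B\parallel A$ (where $\bP(\text{Span}(H,L))=\{H\}$) handled trivially by any single curve of approach.
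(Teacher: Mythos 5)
Your overall strategy is the same as the paper's (produce the limiting plane as a limit of row spaces of $\begin{bmatrix}\nabla f\circ\eta\circ\varphi_1\\ \nabla f\circ\eta\circ\varphi_2\end{bmatrix}$ along a family of distinct pairs on $C$, and identify the second direction with the limit of the second intrinsic derivative), but there is a genuine gap in how you normalize the expansion of $\nabla f\circ\eta$. You write $\nabla f(\eta(u))=u^aA+u^{a+1}B+O(u^{a+2})$ and declare $L=[B\bmod H]$, treating ``$B\parallel A$'' as a trivial degenerate case in which $\bP(\mathrm{Span}(H,L))=\{H\}$. That is not correct: the paper's normal form is $\nabla f\circ\eta(t)=\tau t^{k}+\mu t^{k+r}+\cdots$ where $\mu$ is the \emph{first coefficient not parallel to} $\tau$, and $r$ may well exceed $1$ (intermediate terms parallel to $\tau$, or a vanishing $t^{k+1}$ coefficient). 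In that case $L$ is $[\mu]$, not $[B\bmod H]$ (which is zero or $H$), and $\mathrm{Span}(H,L)$ is still two-dimensional, so the statement to be proved does not become trivial. The paper's own example $f=x^2+y^3+z^4$, $\eta(t)=(i\sqrt{8}t^6,-2t^4,2t^3)$ has $a=6$, $B=0$, $r=2$: there your identification of $L$ fails outright, and your explicit computation also breaks, since $\nabla f(\eta(2u))-2^{a}\nabla f(\eta(u))$ has order $a+2$, so after dividing by $u$ and rescaling by $u^{-a}$ your ``second basis vector'' tends to $0$ rather than to a direction spanning the plane with $A$.

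Two further remarks. First, your choice $\varphi_2(u)=2u$ would in fact survive the general case, because the cancellation coefficient is $c^{k}(c^{r}-1)$ and $2^{r}\neq 1$ for every $r$ (this is exactly why the paper requires $c$ not to be an $r$-th root of unity); but to see this you must carry out the bookkeeping with the first non-parallel term $\mu t^{k+r}$, dividing by $u^{a+r}$ rather than $u^{a+1}$, and you must redo the identification of $L$ by reducing $D_2f(\eta(u))\cdot\eta'(u)$ modulo the full conormal $\nabla f(\eta(u))$ (as the paper does by subtracting $\frac{k}{t}Df(\eta(t))$), which yields $r\mu t^{k+r-1}+\cdots$ and hence $L=[\mu]$. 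Without this, the representative $u^aB+O(u^{a+1})$ you use is only defined up to multiples of $\nabla f(\eta(u))$, and when its leading term is parallel to $A$ its projective limit does not compute $L$. Second, a minor point: Proposition \ref{P2.1} concerns $\cM_D$ and $\mathbf{2}\cM$, not the reduced double $\cM_{D-}$; the description of $\cU(JM(X)_{D-})$ you need follows directly from the definition of $\cU$ for the $2\times n$ matrix of generators, so this is easily repaired, but the citation as given is inaccurate.
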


\begin{proof}
	We calculate limits of the form $$\lim\limits_{t\rightarrow 0} \frac{1}{t^{\ell}}\left(\psi\cdot \left[\begin{matrix}
	Df\circ\eta\circ\varphi_1\\
	Df\circ\eta\circ\varphi_2
	\end{matrix}\right]\right)$$
	
\noindent where the order in $t$ of $\psi\cdot \left[\begin{matrix}
Df\circ\eta\circ\varphi_1\\
Df\circ\eta\circ\varphi_2
\end{matrix}\right]$ is $\ell$. Notice that $Df\circ\eta(t)$ has the form $$\tau t^k+\mu t^{k+r}+\hdots  \eqno{(\star)}$$

\noindent where $\tau \in\bC^n$ is the limiting conormal vector. We can choose $\varphi$ so that $Df\circ\eta\circ\varphi(t)$ has the form $(\star)$ with $\mu\nparallel \tau$, using $\eta\circ\varphi$ instead of $\eta$ if necessary. Let $\varphi_1(t)=t$ and $\varphi_2(t)=ct+\hdots $, $c$ not an $r$-th root of $1$. Now, $\lim\limits_{t\rightarrow 0} \frac{1}{t^{k}}\left((1,0)\cdot \left[\begin{matrix}
Df\circ\eta\circ\varphi_1\\
Df\circ\eta\circ\varphi_2
\end{matrix}\right]\right)=\tau$ and $$\lim\limits_{t\rightarrow 0} \frac{1}{t^{k+r}}\left(-\frac{\varphi_2^k(t)}{t^k}(Df\circ\eta\circ\varphi_1)+Df\circ\eta\circ\varphi_2 \right)$$$$=\lim\limits_{t\rightarrow 0} \frac{1}{t^{k+r}}\left(-\frac{\varphi_2^k}{t^k}\tau t^k-\frac{\varphi_2^k}{t^k}\mu t^{k+r}+\varphi_2^k\tau +\varphi_2^{k+r}\mu+\cdots\right)=\lim\limits_{t\rightarrow 0}\frac{(\varphi_2^{k+r}-\varphi_2^kt^r)}{t^{k+r}}\mu$$$$=\lim\limits_{t\rightarrow 0}c^k(c^r-1)\mu\parallel \mu.$$

Now we describe $\mu$. If we work at $\eta(t)$, we have $$\star\star:=\lim\limits_{h\rightarrow 0}\frac{Df\circ\eta(t+h)-Df\circ\eta(t)}{h}=D_2f(\eta(t))\cdot\eta'(t).$$

Since $C\subset X$, $\eta'\in\ker Df(\eta(t))$, hence this is a value of the second intrinsic derivative of $\eta(t)$. Now we calculate the limit using the normal form $$\star\star=\lim\limits_{h\rightarrow 0}(t+h)^k\tau-t^k\tau+(t+h)^{k+r}\mu-t^{k+r}\mu+\cdots=k\tau t^{k-1}+(k+r)t^{k+r-1}\mu+\cdots$$

Since $t\neq 0$, substract $\frac{k}{t}Df(\eta(t))$ which is a multiple of $Df(\eta(t))$. This gives $((k+r)-k)t^{k+r-1}\mu+\cdots$. Now we projectivize and take the limit as $t\rightarrow 0$, and get $r\mu$.
\end{proof}

\begin{corollary}
	If $X$ is a plane curve so that $C=X$ then $\bP(\mbox{span}(\tau,\mu))$ is the fiber of $\textrm{Projan}(\cR(J(f)_{D-}))$ over $(0,0)$.
\end{corollary}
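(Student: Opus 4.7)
The strategy is a dimension argument that upgrades the inclusion given by Theorem~\ref{T1} to an equality. The key observation is that for a plane curve $X = f^{-1}(0)$ with $f:(\bC^2,0)\to(\bC,0)$, the jacobian module $JM(X)$ has exactly $r=2$ natural global generators, namely $\partial f/\partial z_1$ and $\partial f/\partial z_2$, so the reduced double $JM(X)_{D-} = J(f)_{D-}$ is generated by the two doubles $(\partial_1 f)_D$ and $(\partial_2 f)_D$.

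First I would record that, because $r=2$, $\mathrm{Projan}(\cR(J(f)_{D-}))$ is by construction a closed analytic subvariety of $C\times C\times \bP^{r-1} = C\times C\times \bP^{1}$. Therefore the fiber over $(0,0)$ is, a priori, a closed subset of $\bP^{1}$, and in particular has dimension at most one.

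Next I would invoke Theorem~\ref{T1} (with $C=X$) to obtain the reverse containment: the fiber contains $\bP(\mathrm{span}(\tau,\mu))$. Crucially, the proof of Theorem~\ref{T1} constructs the normal form $Df\circ\eta(t) = \tau t^{k}+\mu t^{k+r}+\cdots$ with the parametrization $\eta$ chosen (after pre-composing with a suitable $\varphi$ if necessary) so that $\mu\nparallel\tau$. In the plane-curve case $\tau,\mu\in\bC^{n}=\bC^{2}$, so linear independence forces $\mathrm{span}(\tau,\mu)=\bC^{2}$ and consequently $\bP(\mathrm{span}(\tau,\mu)) = \bP^{1}$.

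Combining the two inclusions $\bP(\mathrm{span}(\tau,\mu))\subseteq \mathrm{fiber}\subseteq\bP^{1}$ with the identification $\bP(\mathrm{span}(\tau,\mu))=\bP^{1}$ yields the desired equality. There is no substantive obstacle in this argument; it is essentially the observation that Theorem~\ref{T1} already produces a full projective line, and in the plane-curve case the ambient Projan has room for nothing more. The only point worth flagging is the implicit genericity inherited from Theorem~\ref{T1}, namely the ability to choose the parametrization so that the second-order term $\mu$ is not parallel to the leading term $\tau$; this is precisely what prevents the fiber from degenerating to a single point.
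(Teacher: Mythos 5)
Your argument is correct and is essentially the paper's own proof: the paper likewise notes that since $J(f)_{D-}$ has only the two generators $(f_x)_D,(f_y)_D$, the fiber over $(0,0)$ lies in $(0,0)\times\bP^1$, and then the containment from Theorem \ref{T1} (with $\tau\nparallel\mu$, so $\bP(\mbox{span}(\tau,\mu))=\bP^1$) forces equality. Your write-up just makes explicit the dimension count and the genericity of the choice of $\mu$ that the paper leaves implicit.
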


\begin{proof}
	The fiber over $(0,0)$ in this case lies in $(0,0)\times\bP^1$.
\end{proof}

\begin{remark}
	Note that if $C\subseteq\bC^n$, $n>2$ and $\nu$ independent of $\tau$ and $\mu$ then the fiber is $\bP(\mbox{span}(\tau,\mu,\nu))$, provided $r\neq 1$, which has dimension $2$, the maximum possible. $\nu$ is the limit of the $2^{\mbox{\tiny{nd}}}$ intrinsec derivative of $Df$ (i.e the intrinsec derivative of the intrinsec derivative).
\end{remark}

\begin{example}
	Consider $f(x,y,z)=x^2+y^3+z^4$ and $\varphi(t)=(i\sqrt{8}t^6,-2t^4,2t^3)$. Then, $Df\circ\varphi(t)=(2\sqrt{8}i,0,0)t^6+(0,12,0)t^8+(0,0,32)t^9$ \noindent and \begin{center}
		$\tau=(1,0,0)$, $\mu=(0,1,0)$, $\nu=(0,0,1)$.
	\end{center}
\end{example}

There is an important case where we can describe the fiber of $\Projan(\cR(J(f)_D\mid_{C\times C}))$. If $X\subseteq\bC^n,0$ is a hypersurface, $X=f^{-1}(0)$ then the Gauss map $G$ of $X$ \begin{center}
	$G:X\setminus\mbox{Sing}(X)\rightarrow\bP^{n-1}$
\end{center}
\noindent is defined by $G(x)=\langle Df(x)\rangle$.

\begin{theorem}\label{T2}
	Suppose $f$ defines a hypersurface $X$, $C$ an irreducible curve on $X$ with multiplicity 2 on the origin, which is not a line, $G\mid_{C-\{0\}}$ extends to an embedding on $C$. Then the fiber over $(0,0)$ of $\Projan(\cR(J(f)_D\mid_{C\times C}))$ is $\Projan(\cR(J(f)_{D-}))_{(0,0)}$.  
\end{theorem}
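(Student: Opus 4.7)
The strategy is a direct computation of projective limits. By the preceding lemma, every arc $(\bC,0)\to C\times C$ factors through the normalization as $(\eta\circ\varphi_1,\eta\circ\varphi_2)$, so by Proposition \ref{P2.1} an element of the fiber over $(0,0)$ is realized as the limit of $(x(t),x'(t),[q(t)])$, where $x(t):=\eta\circ\varphi_1(t)$, $x'(t):=\eta\circ\varphi_2(t)$ are smooth and distinct for $t\neq 0$, and $q(t)$ lies in $\mbox{row}[J(f)_D(x(t),x'(t))]$. Because $X$ is a hypersurface, $\mbox{row}[J(f)(y)]=\bC\cdot\nabla f(y)$ at smooth $y$, so the matrix description of $[\cM_D]$ in the proof of Proposition \ref{P2.1} shows that $\mbox{row}[J(f)_D(x,x')]$ is spanned by
\begin{align*}
R_1 &:= (\nabla f(x),\,0,\ldots,0),\\
R_2 &:= (\nabla f(x'),\,(x_1-x_1')\nabla f(x'),\ldots,(x_n-x_n')\nabla f(x')).
\end{align*}
Writing $q(t)=\alpha(t)R_1+\beta(t)R_2=(v(t),w_1(t),\ldots,w_n(t))$ with $v=\alpha\nabla f(x)+\beta\nabla f(x')$ and $w_i=\beta(x_i-x_i')\nabla f(x')$, the theorem reduces to the strict inequality $\mbox{ord}_t(v)<\mbox{ord}_t(w_{i_0})$ for some index $i_0$. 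Once this is established, $\lim_{t\to 0}[q(t)]=[v_0:0:\cdots:0]$ with $v_0$ the projective limit of the reduced combination $\alpha\nabla f(x)+\beta\nabla f(x')$, hence $v_0\in\Projan(\cR(J(f)_{D-}))_{(0,0)}$. The reverse inclusion is automatic, since the same data $(\alpha,\beta,\varphi_1,\varphi_2)$ realizes $[v_0:0:\cdots:0]$ in the full-module Projan.

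The two hypotheses feed into this order comparison as follows. Multiplicity $2$ gives $\eta(s)=as^2+O(s^3)$ with $a\neq 0$, so $x_i-x_i'=a_i(\varphi_1^2-\varphi_2^2)+(\mbox{higher order})$; picking any $i_0$ with $a_{i_0}\neq 0$ yields $\mbox{ord}_t(x_{i_0}-x_{i_0}')\geq 2\min(e_1,e_2)$, where $e_j:=\mbox{ord}_t\varphi_j$. The hypothesis that $G|_{C-\{0\}}$ extends to an embedding on $C$ (together with $C$ not a line) forces $G\circ\eta:(\bC,0)\to\bP^{n-1}$ to be an immersion at $0$; in the notation of Theorem \ref{T1} this is exactly $r=1$, i.e.
\[\nabla f(\eta(s))=\tau s^k+\mu s^{k+1}+O(s^{k+2}),\qquad \mu\not\parallel\tau,\ \mu\neq 0.\]

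Writing $\alpha,\beta$ as power series of orders $a,b$ and $\varphi_j(t)=c_j t^{e_j}(1+\cdots)$, substitution yields $\nabla f(x(t))=\tau c_1^k t^{ke_1}+\mu c_1^{k+1}t^{ke_1+e_1}+\cdots$ and analogously for $x'(t)$. A short case analysis on whether $\alpha R_1$ dominates, $\beta R_2$ dominates, or the two compete at equal order—including the critical subcase $a+ke_1=b+ke_2$ in which the leading $\tau$-term cancels as in Theorem \ref{T1}—produces
\[\mbox{ord}_t(v)\leq\min(a+ke_1,b+ke_2)+\min(e_1,e_2),\]
the second summand arising only in the cancellation subcase and being attained because the $\mu$-direction survives exactly one step later (this is where $r=1$ enters). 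Meanwhile, multiplicity $2$ gives
\[\mbox{ord}_t(w_{i_0})\geq b+ke_2+2\min(e_1,e_2),\]
and comparing the two bounds in each subcase produces the strict inequality $\mbox{ord}_t(v)<\mbox{ord}_t(w_{i_0})$ with a gap of at least $\min(e_1,e_2)\geq 1$.

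The main obstacle I foresee is the bookkeeping in the case analysis underlying the bound on $\mbox{ord}_t(v)$: one must verify that no special choice of leading coefficients in $\alpha,\beta$ can cancel both the $\tau$- and the subsequent $\mu$-direction simultaneously at the same order. The hypotheses are tight in this argument: $r\geq 2$ would let $\mbox{ord}(v)$ catch up with $\mbox{ord}(w_{i_0})$ in the cancellation subcase, and multiplicity $1$ (smooth $C$) would collapse the $+2$ gain in the bound for $\mbox{ord}(w_{i_0})$, so both conditions are essential at this single step.
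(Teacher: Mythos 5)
Your overall strategy coincides with the paper's: factor arcs through the normalization as $(\eta\circ\varphi_1,\eta\circ\varphi_2)$, write a rowspace element as $\alpha R_1+\beta R_2$, and show the conormal part $v$ has strictly smaller order than the secant entries $w_i$, with the Gauss hypothesis entering as $r=1$ and the multiplicity~$2$ hypothesis supplying the extra vanishing of the secant. The gap is in the pivotal bound $\mbox{ord}_t(v)\leq\min(a+ke_1,b+ke_2)+\min(e_1,e_2)$: it is false, and it fails exactly in the scenario you flag as the ``main obstacle.'' Take $\varphi_1(t)=t$, $\varphi_2(t)=t+t^N$ with $N$ large, and $\alpha=1$, $\beta=-1$, so $a=b=0$, $e_1=e_2=1$ and the leading coefficients of $\varphi_1,\varphi_2$ agree. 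Then $v(t)=\nabla f(\eta(t))-\nabla f(\eta(t+t^N))=-k\tau t^{k+N-1}+\cdots$, so $\mbox{ord}_t(v)=k+N-1$, far above the claimed $k+1$: the $\tau$- and $\mu$-terms \emph{do} cancel simultaneously at the orders you track, because $\varphi_1$ and $\varphi_2$ are highly tangent. Since your lower bound $\mbox{ord}_t(w_i)\geq b+ke_2+2\min(e_1,e_2)$ only gives $k+2$ here, the comparison $\mbox{ord}(v)<\mbox{ord}(w_i)$ cannot be deduced from your two bounds, so the argument does not close. (Two smaller points: you need the inequality for \emph{all} indices $i$, not ``some $i_0$''; and the reverse inclusion is not ``automatic''--- it uses the same order count to see that reduced-double limits are realized with vanishing secant coordinates.)

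The repair, which is what the paper's proof does, is to measure everything against $\mbox{ord}_t(\varphi_1-\varphi_2)$ rather than $\min(e_1,e_2)$. Write $\nabla f(\eta(s))=f_{z_1}(\eta(s))\,\hat g(s)$ with $\hat g(s)=(1,D)+(0,sE)+\cdots$, where $E\neq 0$ precisely because $G\circ\eta$ is an immersion; since $\hat g(s)-\hat g(s')=(s-s')\bigl((0,E)+\cdots\bigr)$, one has $\mbox{ord}\bigl(\hat g(\varphi_1)-\hat g(\varphi_2)\bigr)=\mbox{ord}(\varphi_1-\varphi_2)$ \emph{exactly}, while each secant entry satisfies $\mbox{ord}(\eta_i\circ\varphi_1-\eta_i\circ\varphi_2)\geq\mbox{ord}(\varphi_1-\varphi_2)+\min(e_1,e_2)$ because $C$ has multiplicity $2$ and is not a line. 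Setting $A=\alpha\,f_{z_1}(\eta\circ\varphi_1)$, $B=\beta\,f_{z_1}(\eta\circ\varphi_2)$ and writing $v=(A+B)\hat g(\varphi_2)+A\bigl(\hat g(\varphi_1)-\hat g(\varphi_2)\bigr)$, a two-case check (according as $\mbox{ord}(A+B)\leq$ or $>\ \mbox{ord}(B)+\mbox{ord}(\varphi_1-\varphi_2)$; in the second case $\mbox{ord}(A)=\mbox{ord}(B)$) gives $\mbox{ord}(v)\leq\mbox{ord}(B)+\mbox{ord}(\varphi_1-\varphi_2)<\mbox{ord}(B)+\mbox{ord}(\varphi_1-\varphi_2)+\min(e_1,e_2)\leq\mbox{ord}(w_i)$ for every $i$, which in particular handles the deep-tangency example above. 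This is exactly the paper's remark that ``the order of the coefficients of the $E$ term is less than the degree of the terms $\eta_i\circ\varphi_1-\eta_i\circ\varphi_2$''; your proposal needs this relative comparison, not the absolute bounds you stated.
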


\begin{proof}
	As before we can assume $\Phi=(\eta\circ\varphi_1,\eta\circ\varphi_2)$, $\eta$ the normalization of $C$.
	
	\underline{Base case}: Assume $\varphi_1=\mbox{id}$ and $\varphi_2=ct+\cdots$, $c\neq 1$.
	
	We may suppose $\deg$$\frac{\partial f}{\partial z_1}$ along $C$ is minimal among $\left\{\deg\frac{\partial f}{\partial z_i}\right\}$.
	
	So \begin{center}
		$\left[\begin{matrix}
		Df(\eta(t))\\
		Df(\eta(\varphi_2(t))) 
		\end{matrix}\right]=\left[\begin{matrix}
		\frac{\partial f}{\partial z_1}(\eta(t))\left((1,D)+(0,tE)+\cdots \right) \\
		\frac{\partial f}{\partial z_1}(\eta(\varphi'_2(t)))\left((1,D)+(0,\varphi_2(t)E)+\cdots\right)
		\end{matrix}\right]$.
	\end{center}

	Notice that $G(\eta(t))=\langle (1,D)+(0,tE)+\cdots\rangle$. The condition that the Gauss map is a local embedding means that $E\neq 0$. Now $\xi=\frac{f_{z_1}\circ\eta(\varphi_2(t))}{f_{z_1}\circ\eta(t)}$ is a unit, apply $(-\xi,1)$ to the matrix of generators of $J(f)_D$: we get \begin{center}
		$f_{z_1}\circ\eta\circ\varphi_2\left[0,((c-1)t+\cdots)E+\cdots, \hdots ,\eta_j(t)-\eta_j(\varphi_2(t)),\hdots\right]$.
	\end{center}

	Since $C$ is not a line the terms $\eta_j(t)-\eta_j(\varphi_2(t))$ must have degree at least $2$ so taking the limit as $t\rightarrow 0$ and projectivizing we get $\langle (0,E,0)\rangle$. Thus in the base case the limits are elements of $\bP(\mbox{span}\{(1,D,0),(0,E,0)\})$.
	
	In the general case the second row tends to $(1,D,0)$ no matter what $\varphi_2$ is, so there needs the cancellation using the first row; but as in the Base case, the order of the coefficients of the $E$ term is less than the degree of the terms $\eta_i\circ\varphi_1-\eta_i\circ\varphi_2$.
\end{proof}

\begin{remark}
	The case where the Gauss map is an embedding is equivalent to the number $r$ in Theorem \ref{T1} being $1$, in which case $c=1$ is the only root of unity. This explains why the fiber of $\Projan(\cR(J(f)_{D-}))$ only has dimeinsion $1$ instead of $2$.  
\end{remark}

Now we specialize to the case where $C$ is an irreducible plane curve. We assume $\eta(t)=(t^n,t^{B_1}+\cdots+a_{B_2}t^{B_2}+\cdots)$, $B_1$ not a multiple of $n$, $B_2$ not in the ideal generated by $\{n,B_1\}$, terms of $\eta_2$ between $t^{B_1}$ and $a_{B_2}t^{B_2}$ have degree which are multiples of $B_1$. We call this the \textit{standard normalization}.

For this part, it is helpful to have Teissier's notes in complex curves singularities \cite{Teissier}, p. 19-20 handy. We refine our previous normal form in a useful way.

\begin{proposition}\label{P1}
	Given an irreducible plane curve $C$ defined by $f(x,y)=0$, if $\eta$ is the standard normalization and $\Phi=(\eta,\eta\circ\varphi)$ then $\Phi^*(J(C)_D)$ has a matrix of generators of form
	
	\begin{center}
		$\left[\begin{matrix}\vspace{0.5cm}
		-f_y\circ\eta\frac{\frac{d\eta_2}{dt}}{\frac{d\eta_1}{dt}} & f_y\circ\eta &   0\\
		-f_y\circ\eta(\varphi(t))(\frac{\frac{d\eta_2}{dt}}{\frac{d\eta_1}{dt}}\circ\varphi)  & f_y\circ\eta(\varphi(t)) &  f_y\circ\eta(\varphi(t))(\eta_1(t)-\eta_1(\varphi(t)),\eta_2(t)-\eta_2(\varphi(t)))\\		
		\end{matrix}\right]$
	\end{center}
\end{proposition}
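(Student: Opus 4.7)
My plan is to start from the generating set $\cB$ of Proposition~\ref{P2}(1) applied to $JM(C)$ with generators $\{f_x,f_y\}$, yielding six generators of $J(C)_D$: the two doubles $(f_x)_D,(f_y)_D$ and four ``correctors'' of the form $(0,(z_i\circ\pi_1-z_i\circ\pi_2)(f_{z_j}\circ\pi_2))$ for $i,j\in\{1,2\}$. I would pull these back via $\Phi=(\eta,\eta\circ\varphi)$ and then eliminate the two $f_x$-correctors by showing they are $\cO_{\bC,0}$-multiples of the corresponding $f_y$-correctors, leaving exactly the four columns displayed.

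The key tool will be the Euler-type identity obtained by differentiating $f\circ\eta=0$ with respect to $t$, namely $f_x(\eta)\,\eta_1'+f_y(\eta)\,\eta_2'=0$. Rearranged, this gives $f_x\circ\eta=-(f_y\circ\eta)\,\eta_2'/\eta_1'$, which rewrites $\Phi^*(f_x)_D$ in the form of column~1 of the claimed matrix (and analogously at $s=\varphi(t)$ for the second entry). Applying the same identity at $s=\varphi(t)$ to each pulled-back $f_x$-corrector gives
\[
\bigl(0,\,(\eta_i-\eta_i\!\circ\!\varphi)\,f_x(\eta\circ\varphi)\bigr)=-\frac{\eta_2'(\varphi)}{\eta_1'(\varphi)}\cdot\bigl(0,\,(\eta_i-\eta_i\!\circ\!\varphi)\,f_y(\eta\circ\varphi)\bigr),
\]
exhibiting each $f_x$-corrector as an $\cO_{\bC,0}$-multiple of the corresponding $f_y$-corrector, provided the multiplier $\eta_2'(\varphi)/\eta_1'(\varphi)$ is verified to be holomorphic.

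The hard part --- really the only nontrivial content --- will be establishing this holomorphy. In the standard normalization $\eta_1=t^n$ and $\eta_2=t^{B_1}+a_{B_2}t^{B_2}+\cdots$, so $\eta_2'/\eta_1'=\tfrac{1}{n}(B_1\,t^{B_1-n}+B_2 a_{B_2}\,t^{B_2-n}+\cdots)$. Since $n$ is the multiplicity of $C$ at the origin we have $n\le B_1$; when $n=1$ the quotient is trivially holomorphic because $\eta_1'$ is a unit, and when $n\ge 2$, the strict inequality $B_1>n$ is forced by the hypothesis that $B_1$ is not a multiple of $n$. All subsequent exponents in $\eta_2$ exceed $B_1$ by the standard-normalization conditions, so $\eta_2'/\eta_1'\in\cO_{\bC,0}$; composing with the holomorphic $\varphi$ preserves holomorphy. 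With this in hand, the remaining reductions are purely algebraic, and the four-column matrix in the statement generates $\Phi^*(J(C)_D)$.
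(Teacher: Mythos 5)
Your proposal is correct and follows essentially the same route as the paper's proof: both start from the generating set of Proposition~\ref{P2} (Proposition 2.3/2.4 of \cite{SG}) pulled back along $\Phi=(\eta,\eta\circ\varphi)$, and both use the identity $f_x\circ\eta=-\,(f_y\circ\eta)\,\frac{d\eta_2/dt}{d\eta_1/dt}$ obtained by differentiating $f\circ\eta\equiv 0$, together with the analyticity of $\frac{d\eta_2/dt}{d\eta_1/dt}$, to rewrite the $f_x$-columns and discard the redundant $f_x$-correctors. Your explicit check that $B_1>n$ in the standard normalization (so the quotient, and its composition with $\varphi$, is holomorphic) simply spells out what the paper states in one line.
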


\begin{proof}
	We have that $f(\eta(t))\equiv 0$ so $Df(\eta(t))\cdot\eta'(t)=0$. Notice that $\frac{\frac{d\eta_2}{dt}}{\frac{d\eta_1}{dt}}$ is analytic, then $f_x\circ\eta(t)=-f_y\circ\eta(t)\frac{\frac{d\eta_2}{dt}}{\frac{d\eta_1}{dt}}$. Now the result follows from Proposition 2.3 of \cite{SG}.
\end{proof}

The lowest degree term in the above matrix is $f_y\circ\eta(t)$. Shortly we will use the row $1$ to cancel the corresponding term in row $2$. Projectivizing we need to understand

\begin{center}
	$\langle \frac{\frac{d\eta_2}{dt}}{\frac{d\eta_1}{dt}}-\frac{\frac{d\eta_2}{dt}}{\frac{d\eta_1}{dt}}\circ\varphi(t),0,(\eta_1(t)-\eta_1(\varphi(t)),\eta_2(t)-\eta_2(\varphi(t))) \rangle$.
\end{center}

The first term measures the difference in slopes of the tangent lines at $\eta(t)$ and $\eta(\varphi(t))$ while the remaining terms are the coordinates of the corresponding secant line.

Since the dimension of $C\times C$ is $2$ and the generic rank of $J(f)_D$ is $2$, the dimension of $\Projan(\cR(J(f)_D))$ is $3$; we expect the fiber over $(0,0)$ to have dimension $2$. The fiber over $(0,0)$ lies in $\bP^3$ by our normal form. In this setting we call the first two coordinates of $\bP^3$ conormal coordinates, the third one the tangent coordinate and the fourth one the normal coordinate. The set of points in $\bP^3$ where the normal components is zero, we call the \textit{tangent component}. It is the smallest convex set in $\bP^3$ which contains the fiber of $\Projan(\cR(J(f)_{D-}))$ and the tangents line to $C$ which is just $(0,0,1,0)$ for our normal form.

\begin{theorem}\label{T3}
	Suppose $C$ is an irreducible plane curve of multiplicity greater than $1$. Let $\eta$ be the standard normalization for $C$, with the associated generators of $J(f)_D$. Then the fiber of $\Projan(\cR(J(f)_{D}))$ over $(0,0)$ is:
	
	\begin{enumerate}
		\item [i)] $\bP^1\times(0,0)$, if $B_1=n+1$ (which is the fiber of $\Projan(\cR(J(f)_{D-}))$);
		
		\item [ii)] The tangent component, if $B_1>n+1$.
	\end{enumerate}
\end{theorem}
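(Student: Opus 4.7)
The plan is to pull back $J(f)_D$ along parametrizations $\Phi=(\eta,\eta\circ\varphi)$ of $C\times C$ using the normal form of Proposition~\ref{P1}, and to analyze the limits in $\bP^3$ of the rowspace of this pullback as $t\to 0$ while $\varphi$ (with $\varphi(0)=0$) varies. First I would row-reduce the matrix of Proposition~\ref{P1} by eliminating $f_y\circ\eta(\varphi)$ from the second row using the first. Setting $u:=f_y\circ\eta$, $v:=f_y\circ\eta(\varphi)$, $s:=(d\eta_2/dt)/(d\eta_1/dt)$, and $d_i:=\eta_i-\eta_i\circ\varphi$, the two rows of the reduced matrix become
\[
R_1 \;=\; u\bigl(-s(t),\,1,\,0,\,0\bigr),\qquad R_2' \;=\; v\bigl(s(t)-s(\varphi(t)),\,0,\,d_1(t),\,d_2(t)\bigr).
\]
The projectivized limit of $R_1$ is always $(0,1,0,0)$ since $s(t)=(B_1/n)t^{k}+\cdots$ with $k:=B_1-n\geq 1$ and so $s\to 0$. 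The crucial order estimate is $\operatorname{ord}(d_2)>\operatorname{ord}(d_1)$ whenever $d_1\not\equiv 0$: writing $d_i=\int_{\varphi(t)}^{t}\eta_i'(\tau)\,d\tau$, one has $|d_2/d_1|\le\max_{\tau\in[\varphi(t),t]}|s(\tau)|\to 0$. In the exceptional case $d_1\equiv 0$ we must have $\varphi(t)=\omega t$ with $\omega^n=1$, $\omega\neq 1$; since $n\nmid B_1$, $\omega^{B_1}\neq 1$, so $\operatorname{ord}(d_2)=B_1>k$ while $\operatorname{ord}(s-s\varphi)=k$.

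For case~(i), where $k=1$, I would verify by a short sub-case analysis on the leading term of $\varphi$ that the first coordinate of $R_2'$ always has strictly smaller order than both $d_1$ and $d_2$ (for $\varphi=c_1 t+\cdots$ with $c_1\neq 1$, it has order $1$ while $d_1$ has order $\geq n\geq 2$; for $\varphi=t+at^{\ell}+\cdots$ with $\ell\geq 2$, it has order $\ell$ while $d_1$ has order $n+\ell-1>\ell$; and the $d_1\equiv 0$ case is handled by the estimate above). Thus $R_2'$ projectivizes to $(1,0,0,0)$. Taking linear combinations $A(t)R_1+B(t)R_2'$ with appropriately chosen orders of $A,B$ (for example $A=t,B=1$) sweeps out the full projective line $\bP^1\times(0,0)$. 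The reverse containment is a direct order check: for any combination the third and fourth coordinates have order strictly greater than at least one of the first two (because $\operatorname{ord}(d_1)\geq n\geq 2$ while $\operatorname{ord}(s-s\varphi)$ is always of order at most the order of $t-\varphi$), so no limit escapes the conormal $\bP^1$.

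For case~(ii), where $k\geq 2$, the containment in the tangent component $\{x_4=0\}$ follows from the same order estimates: the fourth coordinate of any combination $AR_1+BR_2'$ is dominated either by the third (when $d_1\not\equiv 0$, since $\operatorname{ord}(d_2)>\operatorname{ord}(d_1)$) or by the first (when $d_1\equiv 0$, using $B_1>k$). For the reverse containment I would produce enough $R_2'$-limits to sweep out the $\bP^2$: choosing $\varphi(t)=\omega t$ with $\omega^n=1$, $\omega\neq 1$, yields $(1,0,0,0)$; choosing $\varphi(t)=ct+\cdots$ with $c^n\neq 1$ and $c^k\neq 1$ yields either $(0,0,1,0)$ (when $k>n$) or $(1,0,0,0)$ (when $k<n$); and perturbing to $\varphi(t)=\omega t+at^{\ell}$ with $\omega$ a nontrivial $n$-th root and tuning $(a,\ell)$ so that $\operatorname{ord}(d_1)=n+\ell-1$ balances $\operatorname{ord}(s-s\varphi)=k$ produces a one-parameter family of $R_2'$-limits sweeping the projective line $\{x_2=x_4=0\}$. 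Combining these with $R_1$ via linear combinations $A(t)R_1+B(t)R_2'$ with the orders of $A,B$ chosen to balance the now-comparable dominant coordinates traces out all of $\{x_4=0\}\cong\bP^2$; closing up gives the reverse containment.

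The main obstacle is the reverse containment in case~(ii): showing that every point of the tangent component is actually realized requires a careful construction of families of triples $(A(t),B(t),\varphi(t))$ whose orders balance in just the right way, and excluding any hidden constraint that might confine the limits to a proper subvariety (such as a projective line or a conic) of the tangent $\bP^2$. The two arithmetic sub-cases $k<n$ and $k>n$ demand different parameter choices, and the degenerate configurations where $d_1\equiv 0$ must be combined carefully with perturbations in order to traverse the full projective line $\{x_2=x_4=0\}$.
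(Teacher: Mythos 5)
Your overall strategy (pull back by $\Phi$, use the normal form of Proposition~\ref{P1}, row--reduce to $R_1$ and $R_2'$, build limits for the lower bound and do order comparisons for the upper bound) is the same as the paper's, and your constructions of specific limit points for the lower bound are in the right spirit. But the upper bound in case~(ii) rests on a false lemma. Your ``crucial order estimate'' $\mathrm{ord}(d_2)>\mathrm{ord}(d_1)$ whenever $d_1\not\equiv 0$ fails: take $\varphi(t)=\omega t+at^{\ell}$ with $\omega$ a primitive $n$-th root of unity and $\ell$ large. Then $d_1=\eta_1-\eta_1\circ\varphi$ has order $n+\ell-1$, while $d_2=\eta_2-\eta_2\circ\varphi$ has order $B_1$ (because $n\nmid B_1$ forces $\omega^{B_1}\neq 1$), so for $\ell>B_1-n+1$ the inequality reverses. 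The mean-value justification does not survive complexification: $\bigl|\int_{\varphi(t)}^{t}s(\tau)\eta_1'(\tau)\,d\tau\bigr|$ cannot be bounded by $\max|s|\cdot\bigl|\int_{\varphi(t)}^{t}\eta_1'(\tau)\,d\tau\bigr|$ when the second integral has cancellation -- and cancellation is exactly what happens along these $\varphi$. These are precisely the dangerous curves for the containment ``fiber $\subseteq\{x_4=0\}$'': there the third coordinate does \emph{not} dominate the fourth, $\mathrm{ord}(s-s\circ\varphi)=k$, and one can further try to kill the leading term of the first coordinate of $A R_1+B R_2'$ by a suitable choice of $A$. Your proposal never confronts this cancellation; the paper's proof is exactly that missing analysis: it takes $\varphi_2=c_st^s+c_rt^r+\cdots$ with $c_s$ a common root of unity of $n,B_1,\dots,B_{j-1}$ but not of $B_j$, lists the orders of the surviving entries, and checks that even when $r$ and the coefficient are tuned to cancel the $(B_j-n)s$ term in the conormal coordinate, the tangent coordinate still has strictly smaller order than the normal one, viz. $(n-1)s+(B_j-B_1+1)s<B_js$.

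Two further gaps. First, you only consider curves $(\eta(t),\eta(\varphi(t)))$; for the containment direction you must allow $(\eta\circ\varphi_1,\eta\circ\varphi_2)$ where $\varphi_1$ itself has multiplicity $s>1$ (such curves cannot be reparametrized into your form), which is why the paper normalizes $\varphi_1=t^s$, $\varphi_2=c_{s_1}t^{s_1}+\cdots$ before running the order comparison. Second, in the lower bound for case~(ii) your balancing $n+\ell-1=k$ gives $\ell=B_1-2n+1$, which only exists when $B_1>2n$; the sub-case $n<B_1<2n$ needs a different degeneration (the paper's Case~B: $c$ a $(B_1-n)$-th root of unity that is not an $n$-th root, $s=2n+1-B_1$), which you flag as an obstacle but do not carry out, so the sweep of the line $\{x_2=x_4=0\}$ and hence of the full tangent component is not yet established there. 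For case~(i) your direct computation is a genuinely different route from the paper (which simply observes that $B_1=n+1$ makes the Gauss map an embedding and invokes Theorem~\ref{T2}); it could be made to work, but as written it inherits both the restriction to $\varphi_1=\mathrm{id}$ and the unaddressed cancellation in the first coordinate, so it too needs repair.
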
    

\begin{proof}
	(i) If $B_1=n+1$, then $f=ax^{n+1}-y^n$ and the degrees of $f_x\circ\eta,f_y\circ\eta$ differ by $1$, hence the Gauss map $\langle f_x\circ\eta,f_y\circ\eta$ is a local embedding, and we apply Theorem \ref{T2}.
	
	(ii) We first show that the tangent component is in the fiber. Denote the matrix of Proposition \ref{P1} by $SN_D$. There are two cases:
	
	\underline{Case A}: Suppose $B_1>2n$.
	
	Consider 
		$$(1,0)\cdot SN_D=f_y\circ\eta(t)\left(-\frac{\frac{d\eta_2}{dt}}{\frac{d\eta_1}{dt}},1,0,0\right) \eqno\mathbf{(1)}$$
		
		The projective limit of $\mathbf{(1)}$ is $(0,1,0,0)$. Consider the equation  
		
		$$ \left(-\frac{f_y\circ\eta\circ\varphi(t)}{f_y\circ\eta(t)},1\right)\cdot SN_D\eqno\mathbf{(2)}$$
		
		$$=-f_y\circ\eta\circ\varphi(t)\left(\frac{\frac{d\eta_2}{dt}}{\frac{d\eta_1}{dt}}(t)-\frac{\frac{d\eta_2}{dt}}{\frac{d\eta_1}{dt}}(\varphi(t)),0,\eta_1(t)-\eta_1(\varphi(t)),\eta_2(t)-\eta_2(\varphi(t))\right).$$
		
		Now \begin{center}
			$\frac{\frac{d\eta_2}{dt}}{\frac{d\eta_1}{dt}}=\frac{a_{B_1}}{n}t^{B_1-n}+\cdots+\frac{a_{B_2}}{n}t^{B_2-n}+\cdots$.
		\end{center}
	
	So $\mathbf{(2)}$ becomes 
	
	$-f_y\circ\eta\circ\varphi(t)(\frac{a_{B_1}}{n}t^{B_1-n}+\cdots +\frac{a_{B_2}}{n}t^{B_1-n}+\cdots -(\frac{a_{B_1}}{n}{\varphi}^{B_1-n}+\cdots +\frac{a_{B_2}}{n}{\varphi}^{B_2-n}+\cdots ),  0,t^n-\varphi^n,(t^{B_1}-\varphi^{B_1})+\cdots +(a_{B_2}t^{B_2}-a_{B_2}\varphi^{B_2})+\cdots )$.
	
	Since $B_1>2n$, so $B_1-n>n$, let $\phi(t)=ct+dt^s$ where $c$ is an $n^{\mbox{\tiny{th}}}$ root of unity, but not a $B_1$-st root of unity, hence not a $(B_1-n)$-th root of unity, $d$ arbitrary. The degrees of the leading terms which are non-zero are $(B_1-n,n+s-1,B_1)$. If we choose $s$ to solve the equation $B_1-n=n+s-1$, i.e $s=B_1-2n+1$ we get that the projective limit of $(At^{B_1-n}\mathbf{(1)}+B\mathbf{(2)})$ is $\langle B\frac{B_1}{n}(c^{B_1-n}-1),A,-ndB,0   \rangle$. This show that a tangential component is in the fiber since the component is a closed set.
	
	\underline{Case B}: Suppose $2n>B_1>n$.
	
	This is similar to case (A). Let $\phi(t)=ct+dt^s$ where $c$ is a $(B_1-n)$-th root of unity, not an $n$-th root of unity, $s$ solves $B_1-n+s-1=n$, i.e $s=2n+1-B_1$.
	
	Then the degrees of the leading terms of the non-zero elements of $\mathbf(2)$ are $(B_1-n+s-1,n,B_1)$, so the projective limit of $\langle At^n\mathbf{(1)}+B\mathbf{(2)}  \rangle$ is $\langle Bd\frac{B_1}{n},A,B(1-d^n),0 \rangle$.
	
	Now we show in both cases (A) and (B) that in general in the limit the normal coordinate is zero. We may assume after a coordinate change in $t$ that $$\varphi_1=t^s, \varphi_2=c_{s_1}t^{s_1}+\cdots $$
	
	Then
	 $$\eta_1\circ\varphi_1-\eta_1\circ\varphi_2=t^{ns}-c_{s_1}^nt^{s_1n}+\cdots $$
	 $$\eta_2\circ\varphi_1-\eta_2\circ\varphi_2=a_{B_1}t^{B_1s}-a_{B_1}c_{s_1}^{B_1s}t^{B_1s}+\cdots $$
	 
	 In order for the normal coordinate to be non-zero in the limit $s_1=s$ and $c_s=c_{s_1}$ must be an $n$-th root of unity, so assume $\varphi_2=c_st^s+c_rt^r+\cdots $, $c_s$ a root of unity of order $n,B_1,\hdots,B_{j-1}$, not $B_j$. Consider
	 $$\left(\frac{\frac{d\eta_2}{dt}}{\frac{d\eta_1}{dt}}\circ t^s -\frac{\frac{d\eta_2}{dt}}{\frac{d\eta_1}{dt}}\circ\varphi_2,0,\eta_1\circ t^s-\eta_1\circ\varphi_2,\eta_2\circ t^s-\eta_2\circ\varphi_2\right).$$
	 
	 Then the degrees of the leading non-zero terms are $$\left(\min\limits_{j}\{(B_1-n-1)s+r,(B_j-n)s\},(n-1)s+r,\min\limits_{j}\{(B_1-1)s+r,B_js\}\right).$$
	 
	 Note that $(B_1-1)s+r>(n-1)s+r$ and $s(B_j-n)<B_js$, $\forall s$. This shows that if $c_s$ is a root of unity $\forall B_j$, i.e, $c_s=1$. 
	 
	 Hence in the limit the normal coordinate is zero. Then the only way for a limit with non-zero normal coordinate is to assume $r$ large enough so that $(n-1)s+r>B_js$ and use $d$ to satisfy the appropriate equation so that $(B_j-n)s$ is cancelled in the first coordinate. This implies that $r$ satisfies $(B_1-n-1)s+r=(B_j-n)s$, i.e, $r=(B_j-B_1+1)s$. But then the leading term of the tangent coordinate is $(n-1)s+(B_j-B_1+1)s=(B_j+(n-B_1))s<(B_j)s$. So in the limit the normal coordinate is zero.
\end{proof}

Now we describe the fiber of $\projmxd$ over $(x,x)$, where $x$ is a smooth point of $X$. For simplicity, assume that $X\subseteq\bC^n$ is a hypersurface defined by $F:(\bC^n,0)\rightarrow(\bC,0)$ and $x=0$. For $v\in TX_0$, $v\neq 0$, let $v\cdot DF(0)$ denote $\left(v_i\frac{\partial F}{\partial z_j}\right)\in\bC^{n^2}$. Note that the elements of $\projmxd$ lie in $\bP(\bC^n\oplus\bC^{n^2})$ and $\bC^n$ and $\bC^{n^2}$ can be canonically embedded in $\bC^n\oplus\bC^{n^2}$, so we can regard $DF(0)$ and $v\cdot DF(0)$ as vectors in $\bC^n\oplus\bC^{n^2}$.

Let $\Gamma(\hat{D}_2F)$ denote the vector space spanned by $$\{\hat{D}_2F(0)v\oplus v\cdot DF(0)\mid v\in TX_0\}.$$

\begin{proposition}
If $0$ is a smooth point of $X$ then the fiber of $\projmxd$ over $(0,0) $ is the $\bP(\mbox{span}(DF(0),),\Gamma(\hat{D}_2F))$.
\end{proposition}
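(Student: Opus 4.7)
The plan is to compute the fiber directly, by analyzing projective limits of rowspace elements of $[JM(X)_D(x,x')]$ along analytic arcs $(x(t),x'(t))\to(0,0)$ in $X\times X$ with $x(t)\ne x'(t)$. By Proposition~\ref{P2.1}, a general point of $\cU(JM(X)_D)$ off the diagonal can be written as
\[(x,x',[u+u',(x_1-x_1')u',\ldots,(x_n-x_n')u']),\]
with $u=\alpha\,DF(x)$ and $u'=\alpha'\,DF(x')$ for scalars $\alpha,\alpha'\in\bC$, since the rowspace of $[JM(X)(x)]$ at a smooth point is $\bC\,DF(x)$. The fiber is the closure of such projective classes, so it suffices to consider analytic arcs $\Phi(t)=(x(t),x'(t),\alpha(t),\alpha'(t))$ with $x(t),x'(t)\in X$, $x(0)=x'(0)=0$ and $x(t)\ne x'(t)$ for $t\ne 0$. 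The tangency condition $F\circ x\equiv 0$ forces $x(t)=ta+O(t^2)$ and $x'(t)=ta'+O(t^2)$ with $a,a'\in TX_0=\ker DF(0)$.

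\textbf{Forward inclusion.} I would realize each generator of $\mbox{span}(DF(0),\Gamma(\hat{D}_2F))$ as a limit. The class $[DF(0):0]$ arises whenever $\alpha(0)+\alpha'(0)\ne 0$, since the zeroth-order term of the rowspace element is $(\alpha(0)+\alpha'(0))\,DF(0)\oplus 0$. To realize a generator of $\Gamma(\hat{D}_2F)$ corresponding to $v\in TX_0$, take $x'(t)\equiv 0$, choose an analytic arc $x(t)\in X$ with leading term $tv$ (possible since $v\in TX_0$), and set $\alpha\equiv 1,\,\alpha'\equiv-1$. Taylor expansion gives
\[V(t)=\bigl(tD^2F(0)v+O(t^2),\,-tv\otimes DF(0)+O(t^2)\bigr),\]
and after rescaling by $t$ the projective limit is $(D^2F(0)v,-v\otimes DF(0))$, which, using the representative freedom of $\hat{D}_2F(0)v$ modulo $\bC\,DF(0)$, is a class in $\bP(\mbox{span}(DF(0),\Gamma(\hat{D}_2F)))$. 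Perturbing $\alpha,\alpha'$ with suitable first-order terms tunes the $DF(0)$-coefficient freely, so the full projective span is swept out.

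\textbf{Reverse inclusion.} For an arbitrary arc, Taylor-expand
\[V(t)=(\alpha(t)+\alpha'(t))DF(0)\oplus 0+t\bigl(\alpha(0)D^2F(0)a+\alpha'(0)D^2F(0)a',\,\alpha'(0)(a-a')\otimes DF(0)\bigr)+O(t^2).\]
If $\alpha(0)+\alpha'(0)\ne 0$, the projective limit is $[DF(0):0]$. Otherwise $\alpha'(0)=-\alpha(0)$ and the normalized limit has first slot $\alpha(0)D^2F(0)(a-a')+\gamma\,DF(0)$ and second slot $-\alpha(0)(a-a')\otimes DF(0)$, for some scalar $\gamma$ depending on the next-order coefficients of $\alpha,\alpha'$. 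Since $v:=a-a'\in TX_0$ and $D^2F(0)v\equiv\hat{D}_2F(0)v\pmod{\bC\,DF(0)}$, this is a class in $\bP(\mbox{span}(DF(0),\Gamma(\hat{D}_2F)))$.

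\textbf{Main obstacle.} The "generic" case $a\ne a'$ is handled cleanly above; the technical heart is the case analysis when $x(t)-x'(t)$ vanishes to order higher than $t$, or when $\alpha,\alpha'$ themselves vanish to high order, so that the expansion has to be pushed further. The structural fact that drives the induction is that the second slot of $V(t)$ is always a rank-one tensor proportional to $DF(x'(t))\to DF(0)$: after rescaling, its leading non-zero term is necessarily of the form $v\otimes DF(0)$ for some $v\in TX_0$, arising as the leading coefficient of an appropriately rescaled $x-x'$, and the cancellation enforced in the first slot pulls in a matching $D^2F(0)v$-contribution modulo $DF(0)$. Verifying that this bookkeeping closes uniformly, and that no limits outside $\mbox{span}(DF(0),\Gamma(\hat{D}_2F))$ can arise, is the most delicate part of the argument.
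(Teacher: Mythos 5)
Your proposal is correct and takes essentially the same approach as the paper: both prove the forward inclusion by letting one point move along an arc with leading term $tv$ while the other stays at the origin (tuning the $DF(0)$-coefficient with the first row), and both prove the reverse inclusion by a leading-term expansion of a general rowspace element along a pair of arcs on $X$. The ``delicate'' degenerate bookkeeping you defer is exactly what the paper dispatches by taking $k$ to be the multiplicity of $\phi_2-\phi_1$: the order-$t^k$ coefficient of the secant slot is $v\cdot DF(0)\neq 0$ with $v\in TX_0$ the leading coefficient of $x-x'$, this term can never be cancelled, and the matching coefficient in the conormal slot is $D_2F(0)v$ modulo $DF(0)$, so comparing the orders of $\alpha+\alpha'$ and of $t^k\alpha'$ settles every case and your outline closes.
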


\begin{proof}
	Since $X$ is smooth at $0$, by choosing an appropriate linear projection we can view it as the graph of a function $f$, so that $X$ is defined by $z_n-f(z_1,\hdots,z_{n-1})=F$. Note that the second intrinsec derivative of $F$ at $0$ applied to $v\in TX_0$ is $-Df(0)v$.
	
	If $\phi(t)$ is a curve on $X$, $\phi$ has the form $(\varphi_1,\varphi_2,f(\varphi_1,\varphi_2))$, so the leading term of $\phi$ is the leading term of $(\varphi_1,\varphi_2)$. The leading term of $\phi\cdot DF$ is $v\cdot t^k\cdot\frac{\partial F}{\partial z}$, where $v\cdot t^k$ is the leading term of $(\varphi_1,\varphi_2)$. Let $\phi_1(t)=0$ and $\phi_2(t)=\phi$.  The leading term of $$(DF\circ \phi(t)-DF(0),\phi(t)\cdot DF(\phi(t)))$$\noindent is $(-D_2f(0)(vt^k),t^kv\cdot DF(0))$. So the leading term of $$(-1+at^k,1)\cdot\begin{bmatrix}
		DF(0) & 0\\
		DF\circ\phi & (\phi-0)\cdot DF(\phi)
	\end{bmatrix}$$\noindent is $ta^k(DF(0),0)+(-D_2f(0)(vt^k),t^kv\cdot DF(0))$. This shows that the fiber contains the desired space. 

If we replace $\phi_1$ by any other curve, then suppose that the multiplicity of $\phi_2-\phi_1$ is $k$; then again the leading term of $$(DF\circ(\phi_2)-DF\circ(\phi_1),(\phi_2-\phi_1)\cdot DF\circ\phi_2)$$\noindent is $(-D_2f(0)(t^kv),t^kv\cdot DF(0))$ and we get the same result as before (note that $\hat{D}_2F$ is $-D_2f(0))$.
\end{proof}

\section*{Acknowledgements}

Thiago da Silva is funded by CAPES grant number 88887.909401/2023-00 and CAPES grant number 88887.897201/2023-00.

\section*{Data Availability Statement}

This article has no associated data.

\section*{Conflict of interest}

This article has no conflict of interest.

\end{document}